\newtheorem{theorem}{Theorem}
\newenvironment{proof}{\begin{trivlist}
\item[\hskip\labelsep{\it Proof.}]}{$\hfill\Box$\end{trivlist}}
\newcommand{\norm}[1]{\left\Vert#1\right\Vert}
\newcommand{\bsa}{\boldsymbol{a}}
\newcommand{\bsgamma}{\boldsymbol{\gamma}}
\newcommand{\bsb}{\boldsymbol{b}}
\newcommand{\bsx}{\boldsymbol{x}}
\newcommand{\bsh}{\boldsymbol{h}}
\newcommand{\infp}{\operatornamewithlimits{inf\phantom{p}}}
\newcommand{\bsy}{\boldsymbol{y}}
\newcommand{\e}{{\varepsilon}}
\newcommand{\ee}{{\rm e}}
\newcommand{\icomp}{\mathtt{i}}
\newcommand{\rd}{\,\mathrm{d}}
\newcommand{\NN}{\mathbb{N}}
\newcommand{\ZZ}{\mathbb{Z}}
\newcommand{\RR}{\mathbb{R}}
\newcommand{\CC}{\mathbb{C}}
\newcommand{\il}{\left<}
\newcommand{\ir}{\right>}
\newcommand{\abs}[1]{\left\vert#1\right\vert}
\newcommand{\lstd}{\Lambda^{\rm{std}}}
\newcommand{\lall}{\Lambda^{\rm{all}}}
\date{}
\begin{document}

\title{Tractability of multivariate analytic problems}

\author{Peter Kritzer\thanks{P.~Kritzer gratefully
acknowledges the support of the Austrian Science Fund, Project
P23389-N18 and Project F5506-N26, which is part of the Special Research Program ``Quasi-Monte Carlo Methods: Theory
and Applications''.}, Friedrich
Pillichshammer\thanks{F.~Pillichshammer is supported by the Austrian Science Fund (FWF) Project F5509-
N26, which is part of the Special Research Program ``Quasi-Monte Carlo Methods: Theory
and Applications''.}, Henryk Wo\'zniakowski\thanks{H.~Wo\'zniakowski is
supported in part by the National Science Foundation.}}

\maketitle

\begin{abstract}
 In the theory of tractability of multivariate problems one usually
studies problems with finite
smoothness. Then we want to know which $s$-variate problems
can be approximated to within $\varepsilon$ by using, say,
polynomially many in $s$ and $\varepsilon^{-1}$ function
values or arbitrary linear functionals.

There is a recent stream of work for multivariate analytic problems
for which we want to answer the usual tractability
questions with $\varepsilon^{-1}$ replaced by $1+\log \varepsilon^{-1}$.
In this vein of research, multivariate integration and
approximation have been studied over
Korobov spaces with exponentially fast decaying Fourier coefficients.
This is work of J. Dick, G. Larcher,
and the authors. There is a natural need to analyze
more general analytic problems defined over more general spaces
and obtain tractability results in terms of $s$ and $1+\log
\varepsilon^{-1}$.

The goal of this paper is to survey the existing results,
present some new results, and propose
further questions for the study of tractability
of multivariate analytic questions.
\end{abstract}

\noindent\textbf{Keywords:} Tractability, Korobov space, numerical integration,
$L_2$-approximation.\\

\noindent\textbf{2010 MSC:} 65D15, 65D30, 65C05, 11K45.\\

\section{Introduction}

In this paper we discuss algorithms for multivariate integration or
approximation of $s$-variate functions defined
on the unit cube $[0,1]^s$. These problems
have been studied in a large number of papers from many different perspectives.

The focus of this article is to discuss algorithms for
high-dimensional problems defined for functions from
certain Hilbert spaces.
There exist many results for such algorithms, and much progress
has been made on this subject
over the past decades. It is the goal of this review
to focus on a recent vein of research that deals with
function spaces containing
analytic periodic
functions with exponentially fast decaying Fourier coefficients.
We present necessary and sufficient
conditions that allow us
to obtain exponential error convergence and various notions of
tractability.

We consider algorithms that use finitely many information evaluations.
For multivariate integration, algorithms
use $n$ information evaluations
from the class $\Lambda^{\rm{std}}$
of standard information
which consists of only function evaluations.
For multivariate approximation in the $L_2$-norm,
algorithms use $n$ information evaluations either
from the class $\Lambda^{\rm{all}}$ of all continuous linear
functionals or from the class $\Lambda^{\rm{std}}$.
Since we approximate functions from the unit ball of the corresponding
space, without loss of generality we restrict ourselves to linear
algorithms that use nonadaptive information evaluations.
In all cases, we measure the error by considering
the worst-case error setting.
For large~$s$, it is essential to not only control how the error
of an algorithm depends on $n$, but also how it depends on $s$.
To this end, we consider the information complexity, $n(\e,s)$,
which is the minimal number $n$ for which there exists an
algorithm using $n$ information evaluations with
an error of at most $\e$ for the $s$-variate functions.
In all cases considered in this survey,
the information complexity
is proportional to the minimal cost of computing an $\e$-approximation
since linear algorithms are optimal and their implementation
cost is proportional to $n(\e,s)$.

We would like to control how $n(\e,s)$ depends on $\e^{-1}$ and $s$.
This is the subject of tractability. In the standard theory of tractability,
see~\cite{NW08,NW10,NW12}, \emph{weak tractability} means that
$n(\e,s)$ is \emph{not} exponentially dependent  on $\e^{-1}$ and $s$,
\emph{polynomial tractability} means
that $n(\e,s)$ is polynomially bounded in $\e^{-1}$ and $s$,
and \emph{strong polynomial tractability} means that
$n(\e,s)$ is polynomially bounded  in $\e^{-1}$ independently of
$s$.

Typically, $n(\e,s)$ is polynomially dependent on $\e^{-1}$ and $s$
for weighted classes of smooth functions.
The notion of weighted function classes means that
the dependence of functions on successive variables and groups of variables
is moderated by certain weights.
For sufficiently fast decaying weights, the information complexity
depends at most polynomially on $\e^{-1}$ and $s$; hence we obtain
\emph{polynomial} tractability, or even \emph{strong polynomial} tractability.

These notions of tractability are suitable for problems with
finite smoothness, that is, when functions from the problem space
are differentiable only finitely many times.
Then the minimal errors $e(n,s)$ of algorithms that use $n$ information
evaluations typically enjoy polynomial convergence, i.e.,
$e(n,s)=\mathcal{O}(n^{-p})$, where the factor in the big~$\mathcal{O}$
notation as well as a positive $p$ may depend on $s$.

The case of analytic or infinitely many times differentiable functions
is also of interest. For such classes of functions
we would like to replace polynomial convergence by
\textit{exponential convergence}, and study similar notions of
tractability in terms of $(1+\log\,\e^{-1},s)$ instead of
$(\e^{-1},s)$. By exponential convergence we mean that
$e(n,s)=\mathcal{O}(q^{[\mathcal{O}(n)]^p})$
with $q\in(0,1)$, where the factors in the big $\mathcal{O}$
notation as well as a positive $p$ may depend on $s$.

Exponential convergence with various notions of tractability
was studied in the papers~\cite{DLPW11}
and~\cite{KPW12} for multivariate integration in weighted Korobov spaces
with exponentially fast decaying Fourier coefficients.
In the paper \cite{DKPW13}, multivariate $L_2$-approximation
in the worst-case setting for the same class of functions was considered.

In this article, we
give an overview of recent results
on exponential convergence with different
notions of tractability such as weak, polynomial and strong polynomial
tractability in terms of $1+\log\,\e^{-1}$ and $s$.
We also present a few new results and compare conditions which are
needed for the standard and new tractability notions.

In Section~\ref{sectractability}, we give a short overview
of $s$-variate problems, describe
how we measure errors, and give precise
definitions of various notions of tractability.
In Section~\ref{secKor}, we introduce the function class
under consideration here,
which is a special example of a reproducing kernel Hilbert
space that was also studied in \cite{DKPW13, DLPW11, KPW12}.
In Sections~\ref{secint} and~\ref{secapp},
we provide details on the particular problems of $s$-variate
numerical integration and $L_2$-approximation by linear algorithms.
We summarize and give an outlook to some related open
questions in Section~\ref{secconcl}.

\section{Tractability}\label{sectractability}

We consider Hilbert spaces $H_s$ of $s$-variate functions
defined on $[0,1]^s$, and we assume that there is a family of
continuous
linear operators $S_s: H_s\rightarrow G_s$ for $s\in\NN$,
where $G_s$ is a normed space.

Later, we will introduce a special choice of a Hilbert
space $H_s$ (cf. Section~\ref{secKor})
and study two particular examples of $s$-variate problems,
namely:
\begin{itemize}
 \item Numerical integration of functions $f\in H_s$,
see  Section~\ref{secint}. In this case, we have
$S_s (f)=\int_{[0,1]^s} f(\bsx)\rd\bsx$ and $G_s=\RR$.
 \item $L_2$-approximation of functions $f\in H_s$,
see  Section~\ref{secapp}. In this case, we have
 $S_s(f)=f$ and $G_s=L_2 ([0,1]^s)$.
\end{itemize}

As already mentioned, without loss of generality, we
approximate $S_s$ by a linear algorithm $A_{n,s}$
using $n$ information evaluations which are given by linear
functionals from the class $\Lambda\in\{\lall,\lstd\}$. That is,
$$
A_{n,s}(f)=\sum_{j=1}^nL_j(f)\,a_j \ \ \ \ \ \mbox{for all}\ \ \ \ \
f\in H_s,
$$
where $L_j\in \Lambda$ and $a_j\in G_s$ for all $j=1,2,\dots,n$.
For $\Lambda=\lall$ we have $L_j\in H_s^*$ whereas for $\Lambda=\lstd$
we have $L_j(f)=f(x_j)$ for all $f\in H_s$, and for some $x_j\in[0,1]^d$.
For $\lstd$, we choose $H_s$ as a reproducing kernel Hilbert space so
that $\lstd\subset\lall$.

We measure the error of an algorithm $A_{n,s}$
in terms of the \textit{worst-case error}, which is defined as
\[
e(H_s,A_{n,s}):=\sup_{f \in H_s \atop
\|f\|_{H_s} \le 1} \norm{S_s(f)-A_{n,s}(f)}_{G_s},
\]
where $\norm{\cdot}_{H_s}$ denotes the norm in $H_s$,
and $\norm{\cdot}_{G_s}$ denotes the norm in $G_s$. The \textit{$n$th minimal
(worst-case) error} is given by
$$
e(n,s):=\infp_{A_{n,s}} e(H_s, A_{n,s}),
$$
where the infimum is taken over all admissible algorithms $A_{n,s}$.

For $n=0$, we consider algorithms that do not use information
evaluations and therefore we use $A_{0,s}\equiv 0$.
The error  of $A_{0,s}$ is called the \textit{initial (worst-case) error} and
is given by
$$
e(0,s):=\sup_{f \in H_s \atop
\|f\|_{H_s} \le 1} \norm{S_s(f)}_{G_s}=\norm{S_s}.
$$

When studying algorithms $A_{n,s}$, we do not only want
to control how their errors depend on $n$, but also how
they  depend on the dimension $s$. This is of particular
importance for high-dimensional problems. To this end, we define,
for $\e\in(0,1)$ and $s\in\NN$, the {\it information complexity} by
\[
  n(\varepsilon,s):=
  \min\left\{n\,:\,e(n,s)\le\varepsilon \right\}
\]
as the minimal number of information evaluations needed to obtain an
$\e$-approximation to $S_s$. In this case, we speak of
the \textit{absolute error criterion}.
Alternatively, we can also define the information complexity as
\[
  n(\varepsilon,s):=
  \min\left\{n\,:\,e(n,s)\le\varepsilon e(0,s)\right\},
\]
i.e., as the minimal number of information evaluations
needed to reduce the initial error by a factor of $\e$. In this
case we speak of the \textit{normalized error criterion}.

The examples
considered in this paper have the convenient property
that the initial errors are one, and the absolute and normalized
error criteria coincide.
For problems for which the initial errors are not one, the results
for the absolute and normalized error criteria may be quite different;
we refer the interested reader
to the monographs \cite{NW08, NW10, NW12} for further details.

\vskip 1pc

The subject of tractability deals with the question how
the information complexity depends on $\e^{-1}$ and $s$. Roughly speaking,
tractability means that the information complexity lacks
a certain disadvantageous dependence on $\e^{-1}$ and $s$.

The standard notions of tractability were introduced in such a way
that positive results were possible for problems
with finite smoothness. In this case, one is usually
interested in when $n(\e,s)$ depends at most
polynomially on $\e^{-1}$ and $s$.
The following notions have been frequently
studied. We say that we have:

\begin{itemize}
\item[(a)]
The \emph{curse of dimensionality} if there exist positive $c,\tau$ and
$\e_0$ such that
$$
n(\e,s)\ge c\,(1+\tau)^s \ \ \ \ \
\mbox{for all\, $\e\le \e_0$\, and infinitely many $s$}.
$$

\item[(b)] \emph{Weak Tractability (WT)} if
$$
\lim_{s+\e^{-1}\to\infty}\frac{\log\
  n(\varepsilon,s)}{s+\e^{-1}}=0\ \ \ \ \ \mbox{with}\ \ \
 \log\,0=0\ \ \mbox{by convention}.
$$

\item[(c)] \emph{Polynomial Tractability (PT)} if
there exist non-negative numbers $c,\tau_1,\tau_2$ such that
$$
n(\varepsilon,s)\le
c\,s^{\,\tau_1}\,(\e^{-1})^{\,\tau_2}\ \ \ \ \ \mbox{for all}\ \ \ \
s\in\NN, \ \e\in(0,1).
$$
\item[(d)]
\emph{Strong Polynomial Tractability (SPT)} if there exist non-negative
  numbers $c$ and $\tau$  such that
$$
n(\varepsilon,s)\le
c\,(\e^{-1})^{\,\tau}\ \ \ \ \ \mbox{for all}\ \ \ \
s\in\NN, \ \e\in(0,1).
$$
The exponent $\tau^*$ of strong polynomial tractability is defined as
the infimum of $\tau$ for which  strong polynomial tractability holds.
\end{itemize}

It turns out that many multivariate problems defined over
standard spaces of functions suffer from the curse of dimensionality.
The reason for this negative result is that for standard spaces
all variables and groups of variables are equally important.
If we introduce weighted spaces, in which the importance of successive
variables and groups of variables is monitored by corresponding
weights, we can vanquish the curse of dimensionality and obtain weak,
polynomial or even strong polynomial tractability depending on the
decay of the weights.
Furthermore, this holds for weighted spaces with finite smoothness.
We refer to \cite{NW08, NW10, NW12}
for the current state of the art in this field of research.

However, the particular weighted function space
we are going to define in Section~\ref{secKor} is such that
its elements are infinitely many times differentiable and even
analytic. Therefore, it is natural to demand more
of the $n$th minimal errors $e(n,s)$ and of
the information complexity $n(\e, s)$ than for those cases where
we only have finite smoothness.

To be more precise, we are interested in obtaining
exponential or uniform exponential convergence
of the minimal errors $e(n,s)$
for problems with unbounded smoothness.
We now explain how these notions are defined.
By exponential convergence
we mean that there exist functions
$q:\NN\to(0,1)$ and $p,C:\NN\to (0,\infty)$ such that
$$
e(n,s)\le C(s)\, q(s)^{\,n^{\,p(s)}}
\ \ \ \ \ \mbox{for
all} \ \ \ \ \ s, n\in \NN.
$$
Obviously, the functions $q(\cdot)$ and $p(\cdot)$ are not uniquely defined.
For instance, we can take an arbitrary number $q\in(0,1)$,
define the function $C_1$ as
$$
C_1(s)=\left(\frac{\log\,q}{\log\,q(s)}\right)^{1/p(s)},
$$
and then
$$
C(s)\, q(s)^{\,n^{\,p(s)}}=C(s)\,q^{\,(n/C_1(s))^{p(s)}}.
$$
We prefer to work with the latter bound which was also considered
in~\cite{DKPW13, KPW12}.

We say that we achieve  \emph{exponential convergence} (EXP)
for $e(n,s)$ if
there exist a number $q\in(0,1)$ and
functions $p,C,C_1:\NN\to (0,\infty)$ such that
\begin{equation}\label{exrate}
e(n,s)\le C(s)\, q^{\,(n/C_1(s))^{\,p(s)}}
\ \ \ \ \ \mbox{for
all} \ \ \ \ \ s, n\in \NN.
\end{equation}
If \eqref{exrate} holds we would like to find the largest possible
rate $p(s)$ of exponential convergence
defined as
$$
p^*(s)=\sup\{\,p(s)\ :\ \ p(s)\ \ \mbox{satisfies \eqref{exrate}}\,\}.
$$

We say that we achieve \emph{uniform exponential convergence} (UEXP)
for $e(n,s)$ if the function $p$
in \eqref{exrate} can be taken as a constant function, i.e.,
$p(s)=p>0$ for all $s\in\NN$. Similarly, let
$$
p^*=\sup\{\,p\ :\ \ p(s)=p>0\ \ \mbox{satisfies \eqref{exrate} for all
$s\in\NN$}\,\}
$$
denote the largest rate of uniform exponential convergence.
\vskip 1pc
Exponential convergence implies that asymptotically, with
respect to $\e$ tending to zero, we need $\mathcal{O}(\log^{1/p(s)}
\e^{-1})$ information evaluations to compute an
$\e$-approximation.
However, it is not
clear how long we have to wait to see this nice asymptotic
behavior especially for large $s$. This, of course, depends on
how $C(s),C_1(s)$ and $p(s)$ depend on $s$, and it is therefore
near at hand to adapt the concepts (b)--(d)
of tractability to exponential error convergence.
Indeed, we would like to replace $\e^{-1}$ by
$1+\log\,\e^{-1}$ in the standard notions (b)--(d),
which yields new versions of weak, polynomial, and
strong polynomial tractability. The following
new tractability versions (e), (f), and (g)
were already introduced in \cite{DKPW13, DLPW11, KPW12}.
We use a new kind of notation in order
to be able to distinguish (b)--(d) from (e)--(g). We say that we have:
\begin{itemize}
\item[(e)] \emph{Exponential Convergence-Weak Tractability (EC-WT)} if
$$
\lim_{s+\log\,\e^{-1}\to\infty}\frac{\log\
  n(\varepsilon,s)}{s+\log\,\e^{-1}}=0\ \ \ \ \ \mbox{with}\ \ \
 \log\,0=0\ \ \mbox{by convention}.
$$
\item[(f)] \emph{Exponential Convergence-Polynomial Tractability
    (EC-PT)} if there exist non-nega\-tive
  numbers $c,\tau_1,\tau_2$ such that
$$
n(\varepsilon,s)\le
c\,s^{\,\tau_1}\,(1+\log\,\e^{-1})^{\,\tau_2}\ \ \ \ \ \mbox{for all}\ \ \ \
s\in\NN, \ \e\in(0,1).
$$
\item[(g)]
\emph{Exponential Convergence-Strong Polynomial Tractability (EC-SPT)}
if there exist non-negative
  numbers $c$ and $\tau$  such that
$$
n(\varepsilon,s)\le
c\,(1+\log\,\e^{-1})^{\,\tau}\ \ \ \ \ \mbox{for all}\ \ \ \
s\in\NN, \ \e\in(0,1).
$$
The exponent $\tau^*$ of EC-SPT is defined as
the infimum of $\tau$ for which EC-SPT holds.
\end{itemize}

Let us give some comments on these definitions.
First, we remark that the use of the prefix EC
(exponential convergence) in (e)--(g)
is motivated by the fact that EC-PT (and therefore also
EC-SPT) implies exponential convergence (cf.~Theorem \ref{thmintectract}). 
Also EC-WT implies that $e(n,s)$
converges to zero faster than any power of $n^{-1}$ as $n$ goes to infinity,
i.e., for any $\alpha >0$
we have
\begin{equation}\label{eqlimit}
\lim_{n \rightarrow \infty} n^{\alpha} e(n,s)=0.
\end{equation}
This can be seen as follows.
Let $\alpha >0$ and choose $\delta\in (0,\frac{1}{\alpha})$.
For a fixed dimension~$s$, EC-WT implies
the existence of an $M=M(\delta) >0$ such that for
all $\varepsilon>0$ with $\log \varepsilon^{-1} >M$ we have
$$
\frac{\log n(\varepsilon,s)}{\log \varepsilon^{-1}} <
\delta \ \Leftrightarrow \ n(\varepsilon,s) < \varepsilon^{-\delta}.
$$
This implies that for large enough $n \in \NN$ we have $e(n,s)<
 n^{-1/\delta}$. Hence, we have
$n^{\alpha} e(n,s) < n^{\alpha-1/\delta} \rightarrow 0$
as $n \rightarrow \infty$.

Furthermore we note, as in \cite{DKPW13, DLPW11},
that if \eqref{exrate} holds then
\begin{equation}\label{exrate2}
n(\e,s)
\le \left\lceil C_1(s) \left(\frac{\log C(s) +
    \log \e^{-1}}{\log q^{-1}}\right)^{1/p(s)}\right\rceil
\ \ \ \ \ \mbox{for all}\ \ \ s\in \NN\ \ \mbox{and}\ \ \e\in (0,1).
\end{equation}
Moreover, if~\eqref{exrate2} holds then
$$
e(n+1,s)\le C(s)\, q^{\,(n/C_1(s))^{\,p(s)}}\ \
\ \ \ \mbox{for all}\ \ \ s,n\in \NN.
$$
This means that~\eqref{exrate} and~\eqref{exrate2} are practically
equivalent. Note that $1/p(s)$ determines the power of $\log\,\e^{-1}$
in the information complexity,
whereas $\log\,q^{-1}$ affects only the multiplier of $\log^{1/p(s)}\e^{-1}$.
From this point of view, $p(s)$ is more
important than $q$.

In particular, EC-WT means that
we rule out the cases for which
$n(\e,s)$
depends exponentially on $s$ and $\log\,\e^{-1}$.

For instance, assume that~\eqref{exrate} holds.  Then
uniform exponential convergence (UEXP) implies EC-WT if
$$
C(s)=\exp\left(\exp\left(o(s)\right)\right)
\ \ \ \mbox{and}\ \ \ C_1(s)=\exp(o(s))
\ \ \ \ \
\mbox{as}\ \ \ \ \ s\to\infty.
$$
These conditions are rather weak since
$C(s)$ can be almost doubly exponential and $C_1(s)$ almost
exponential in $s$.

The definition of EC-PT (and EC-SPT) implies
that we have uniform exponential
convergence with $C(s)=\ee$ (where $\ee$ denotes $\exp (1)$),
$q=1/\ee$, $C_1(s)=c\,s^{\,\tau_1}$ and $p=1/\tau_2$.
Obviously, EC-SPT implies
$C_1(s)=c$ and $\tau^*\le1/p^*$.

If~\eqref{exrate2} holds then we have EC-PT
if
$p:=\inf_sp(s)>0$ and
there exist non-negative numbers $A,A_1$ and $\eta,\eta_1$ such that
$$
C(s)\le \exp\left(A s^{\eta}\right)\ \ \ \mbox{and}\ \ \
C_1(s)\le A_1\,s^{\eta_1}\ \ \ \ \
\mbox{for all}\ \ \ \ \
s\in\NN.
$$
The condition on $C(s)$ seems to be quite weak since even for
singly exponential $C(s)$ we have EC-PT.
Then $\tau_1=\eta_1+\eta/p$ and $\tau_2=1/p$.
EC-SPT holds if $C(s)$ and $C_1(s)$ are uniformly bounded in $s$, and then
$\tau^*\le 1/p$.

We briefly mention a recent paper~\cite{PP13}, where a new notion of
weak tractability is defined similarly to EC-WT. Namely, let
$\kappa\ge1$. Then it is required that
\begin{equation}\label{PP13}
\lim_{s+\log\,\e^{-1}\to\infty}\frac{\log\
  n(\varepsilon,s)}{s+[\log\,\e^{-1}]^\kappa}=0\ \ \ \ \ \mbox{with}\ \ \
 \log\,0=0\ \ \mbox{by convention}.
\end{equation}
Obviously, for $\kappa=1$ this is the same as EC-WT.
However, for $\kappa>1$ the condition on WT is relaxed.
This is essential and leads to new results for linear unweighted tensor
product problems.
\vskip 1pc

In the following sections, we are going to discuss
a special choice of $H_s$ and study the problems of
$s$-variate integration and $L_2$-approximation.

\section{A weighted Korobov space of analytic functions}\label{secKor}

In this article, we choose for the Hilbert space $H_s$
a weighted Korobov space of periodic and smooth functions,
which is probably the most popular kind of space used to analyze periodic
functions. Such Korobov spaces can be defined via a
reproducing kernel (for general information on reproducing kernel
Hilbert spaces, see~\cite{Aron}) of the form
\begin{equation}\label{defkernel}
K_s(\bsx,\bsy)=\sum_{\bsh \in \ZZ^s} \rho_{\bsh}
\exp(2 \pi \icomp \bsh \cdot (\bsx-\bsy)) \ \ \ \mbox{for all}\ \ \
\bsx,\bsy\in[0,1]^s
\end{equation}
with the usual dot product
$$
\bsh\cdot(\bsx-\bsy)=\sum_{j=1}^sh_j(x_j-y_j),
$$
where
$h_j,x_j,y_j$ are the $j$th components of the vectors
$\bsh,\bsx,\bsy$, respectively. Furthermore,
$\icomp=\sqrt{-1}$. The nonnegative
$\rho_{\bsh}$ for $\bsh \in \ZZ^s$, which may also depend on $s$
and other parameters, are chosen such that
$\sum_{\bsh \in \ZZ^s} \rho_{\bsh} < \infty$. This choice
guarantees that the kernel $K_s$ is well defined, since
$$
|K_s(\bsx,\bsy)| \le  K_s(\bsx, \bsx) =
\sum_{\bsh \in \ZZ^s} \rho_{\bsh} < \infty.
$$
Obviously, the function $K_s$
is symmetric in $\bsx$ and $\bsy$ and it is easy
to show that it is also positive definite.
Therefore, $K_s(\bsx, \bsy)$ is indeed a
reproducing kernel. The corresponding Korobov space is
denoted by $H(K_s)$.

The smoothness of the functions
from $H(K_s)$ is determined by the decay of the $\rho_{\bsh}$'s.
A very well studied case in literature is for
Korobov spaces of {\it finite} smoothness~$\alpha$.
Here $\rho_{\bsh}$ is of the form
$$
\rho_{\bsh}=r_{\alpha,\bsgamma}(\bsh),
$$
where $\alpha> 1$ is a real, $\bsgamma= (\gamma_1, \gamma_2, \ldots)$
is a sequence of positive reals, and
for $\bsh = (h_1, \ldots , h_s)$ we have
$$
r_{\alpha,\bsgamma}(\bsh) =\prod_{j=1}^s r_{\alpha,\gamma_j}(h_j),
$$
with $r_{\alpha,\gamma}(0) =1$ and
$r_{\alpha,\gamma}(h)=\gamma |h|^{-\alpha}$ whenever $h \not= 0$.

Hence the $\rho_{\bsh}$'s decay polynomially in the
components of $\bsh$. The parameter $\alpha$ guarantees
the existence of some partial derivatives of the functions
and the so-called weights $\bsgamma$
model the influence of the different components
on the variation of the functions from the Korobov space.
More information can be found in \cite[Appendix~A.1]{NW08}.

The idea of introducing weights stems from Sloan and Wo\'{z}niakowski
and was first discussed in \cite{SW98}.
For multivariate integration defined over
weighted Korobov spaces of smoothness $\alpha$,
algorithms based on $n$ function evaluations can obtain
the best possible convergence rate of order
$\mathcal{O}(n^{-\alpha/2+\delta})$ for any $\delta>0$.
Under certain conditions on the weights, weak, polynomial or even strong
polynomial tractability in the sense of (b)--(d) can be achieved.
We refer to \cite{NW08,NW10,NW12} and the references therein
and to the recent survey \cite{DKS14} for further details.

Besides the case of finite smoothness, Korobov spaces of {\it infinite}
smoothness were also considered. In this case,
the $\rho_{\bsh}$'s decay to zero
exponentially fast in $\bsh$. Multivariate integration and
$L_2$-approximation for such Korobov spaces
have been analyzed in \cite{DKPW13, DLPW11, KPW12}.
To model the influence of different components
we use two weight sequences
$$
\bsa=\{a_j\}_{j \ge 1}\ \ \  \mbox{and}\ \ \
\bsb=\{b_j\}_{j \ge 1}.
$$
In order to guarantee that the kernel that we will
introduce in a moment is well defined
we must assume that $a_j>0$ and $b_j>0$.
In fact, we assume a little more throughout the paper, namely  that
with the proper ordering of variables we have
\begin{equation}\label{aabb}
0<a_1\le a_2\le \cdots\ \ \ \ \mbox{ and }\ \ \ b_\ast =\inf b_j >0.
\end{equation}
Let $a_{\ast}=\inf a_j$ which is $a_1$ in our case.

Fix $\omega\in(0,1)$ and put in \eqref{defkernel}
\begin{equation}\label{formofomega}
\rho_{\bsh}=\omega_{\bsh}:=\omega^{\sum_{j=1}^{s}a_j \abs{h_j}^{b_j}}
\qquad\mbox{for all}\qquad \bsh=(h_1,h_2,\dots,h_s)\in\ZZ^s.
\end{equation}
For this choice of $\rho_{\bsh}$
we denote the kernel in \eqref{defkernel} by $K_{s,\bsa,\bsb}$.
We suppress the dependence on $\omega$ in the notation
since $\omega$ will be fixed throughout the paper and $\bsa$ and
$\bsb$ will be varied.
Note that $K_{s,\bsa,\bsb}$ is well defined since
\begin{eqnarray*}
\sum_{\bsh \in \ZZ^s} \omega_{\bsh}
=
\prod_{j=1}^s \left(1+2 \sum_{h=1}^{\infty}
\omega^{a_j h^{b_j}}\right) \le
\left(1+2 \sum_{h=1}^{\infty} \omega^{a_\ast h^{b_\ast}}\right)^s <\infty.
\end{eqnarray*}
The last series is finite by the comparison test
because $a_\ast >0$ and $b_\ast >0$.

The Korobov space with reproducing kernel
$K_{s,\bsa,\bsb}$ is denoted by $H(K_{s,\bsa,\bsb})$.
Clearly, functions from $H(K_{s,\bsa,\bsb})$ are infinitely many
times differentiable, see \cite{DLPW11}, and they are even analytic
as shown in \cite[Proposition~2]{DKPW13}.

For $f\in H(K_{s,\bsa,\bsb})$ we have
$$
f(\bsx)=\sum_{\bsh\in\ZZ^s} \widehat f(\bsh)\,\exp(2\pi \icomp
\bsh \cdot\bsx) \ \ \ \mbox{for all}\ \ \ \bsx\in [0,1]^s,
$$
where $\widehat{f}(\bsh) =
\int_{[0,1]^s} f(\bsx) \exp(-2 \pi \icomp \bsh \cdot \bsx) \rd \bsx$
is the $\bsh$th Fourier coefficient of $f$.
The inner product of $f$ and $g$ from $H(K_{s,\bsa,\bsb})$ is given by
$$
\il f,g\ir_{H(K_{s,\bsa,\bsb})}=\sum_{\bsh\in \ZZ^s}\widehat f(\bsh)\,
\overline{\widehat g(\bsh)}\, \omega_{\bsh}^{-1}
$$ and the norm of $f$ from $H(K_{s,\bsa,\bsb})$ by
$$
\|f\|_{H(K_{s,\bsa,\bsb})}=\left(\sum_{\bsh\in \ZZ^s}|\widehat
f(\bsh)|^2\omega_{\bsh}^{-1}\right)^{1/2}<\infty.
$$
Define the functions
\begin{equation}\label{basis}
e_{\bsh}(\bsx)=\exp(2\pi\icomp\,\bsh\cdot\bsx)\,
\omega_{\bsh}^{1/2}\ \ \ \ \
\mbox{for all}\ \ \ \ \ \bsx \in[0,1]^s.
 \end{equation}
Then $\{e_{\bsh}\}_{\bsh\in\ZZ^s}$ is a complete
orthonormal basis of the Korobov space $H(K_{s,\bsa,\bsb})$.

\section{Integration in $H(K_{s,\bsa,\bsb})$}\label{secint}

In this section we study numerical integration,
i.e., we are interested in numerical approximation of
the values of integrals
\[I_s (f)=\int_{[0,1]^s}f(\bsx)\rd\bsx\ \ \ \  \
\mbox{for all}\ \ \ f\in H(K_{s,\bsa,\bsb}).\]
Using the general notation from
Section~\ref{sectractability}, we now have
$S_s(f)=I_s(f)$ for functions $f\in H_s=H(K_{s,\bsa,\bsb})$, and $G_s=\CC$.

We approximate $I_s(f)$
by means of linear algorithms $Q_{n,s}$ of the form
\[Q_{n,s}(f):=\sum_{k=1}^n q_k f(\bsx_k),\]
where coefficients $q_k\in \CC$ and sample points $\bsx_k\in[0,1)^s$.
If we choose $q_k=1/n$ for all $k=1,2,\ldots , n$
then we obtain so-called {\it quasi-Monte Carlo (QMC)
algorithms} which are often used in practical applications especially
if $s$ is large. For recent overviews of the study of
QMC algorithms we refer to \cite{DKS14,DP10,KSS11}.

The $n$th minimal worst-case error is given by
$$
e^{\mathrm{int}}(n,s)=\infp_{q_k,\bsx_k,\
  k=1,2,\dots,n}\ \sup_{f\in H(K_{s,\bsa,\bsb})
\atop \|f\|_{H(K_{s,\bsa,\bsb})}\le1\ }
\bigg|I_s(f)-\sum_{k=1}^nq_k f(\bsx_k)\bigg|.
$$
It is well known, see for instance \cite{NW10,TWW88}, that
\begin{equation}\label{wellknown}
e^{\mathrm{int}}(n,s)=\infp_{\bsx_k,\
k=1,2,\dots,n}\ \ \sup_{f\in H(K_{s,\bsa,\bsb}), \ f(\bsx_k)=0, \ k=1,2,\dots,n
\atop \|f\|_{H(K_{s,\bsa,\bsb})}\le1\ } |I_s(f)|.
\end{equation}
For $n=0$, the best we can do is to approximate $I_s(f)$ simply by zero, and
$$
e^{\mathrm{int}}(0,s)=\|I_s\|=1\ \ \ \ \ \mbox{for all}\ \ \ \ \ s\in \NN.
$$
Hence, the integration problem is well normalized for all $s$.

We now
summarize the main results regarding numerical integration
in $H(K_{s,\bsa,\bsb})$.
Here and in the following, we will be using the notational abbreviations
\begin{center}
EXP\qquad UEXP\\ WT\qquad PT\qquad SPT\\
EC-WT\qquad \ EC-PT\qquad \ EC-SPT\\
\end{center}
to denote exponential and uniform exponential convergence, and
weak, polynomial and strong polynomial tractability
in terms of (b)--(d) and (e)--(g).
We now state relations between these concepts as well as
necessary and sufficient conditions on $\bsa$ and $\bsb$ for which these
concepts hold. As we shall see, in the settings considered in this paper, many conditions for obtaining these concepts are equivalent.

We first state a theorem which describes
conditions on the weight sequences $\bsa$ and~$\bsb$
to obtain exponential (EXP)
and uniform exponential (UEXP) convergence.
This theorem is from \cite{DKPW13, KPW12}.

\begin{theorem}\label{thmint(u)exp}
Consider integration defined over the Korobov space
$H(K_{s,\bsa,\bsb})$ with weight sequences $\bsa$ and
$\bsb$ satisfying~\eqref{aabb}.
\begin{itemize}
\item\label{intexp}
EXP holds for all considered $\bsa$ and $\bsb$ and
$$
p^{*}(s)=\frac{1}{B(s)} \ \ \ \ \
\mbox{with}\ \ \ \ \ B(s):=\sum_{j=1}^s\frac1{b_j}.
$$
\item\label{intuexp}
UEXP holds iff $\bsb$ is such that
$$
B:=\sum_{j=1}^\infty\frac1{b_j}<\infty.
$$
If so then $p^*=1/B$.
\end{itemize}
\end{theorem}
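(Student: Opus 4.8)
The plan is to reduce everything to an explicit formula for the $n$th minimal error of integration in $H(K_{s,\bsa,\bsb})$. Since the basis functions $e_{\bsh}$ in \eqref{basis} are orthonormal and $I_s(e_{\bsh})=\omega_{\bsh}^{1/2}\,[\bsh=\bszero]$, the integration functional has a one-dimensional image and the usual theory of optimal algorithms for such problems (using the worst-case error formula \eqref{wellknown}, or equivalently the reproducing-kernel representer $h_{n,s}$ of the error functional) gives that $e^{\mathrm{int}}(n,s)^2$ equals the $(n+1)$st largest value among the numbers $\{\omega_{\bsh}:\bsh\in\ZZ^s,\ \bsh\ne\bszero\}$, sorted in nonincreasing order. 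Concretely, if we enumerate the nonzero $\bsh$ so that $\omega_{\bsh_1}\ge\omega_{\bsh_2}\ge\cdots$, then $e^{\mathrm{int}}(n,s)=\omega_{\bsh_{n+1}}^{1/2}$. (This is exactly the computation carried out in \cite{KPW12}; I would simply cite it, or reprove it in two lines from \eqref{wellknown} by choosing the lattice-type point set that annihilates the $n$ largest frequencies.) This turns the problem into a purely combinatorial counting question about the exponents $\sum_{j=1}^s a_j|h_j|^{b_j}$.

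First I would set up the counting function. For $t\ge0$ let
$$
N_s(t)=\#\Big\{\bsh\in\ZZ^s:\ \sum_{j=1}^s a_j|h_j|^{b_j}\le t\Big\}.
$$
Then $e^{\mathrm{int}}(n,s)\le \omega^{t/2}$ iff $N_s(t)\ge n+1$, so establishing \eqref{exrate}-type bounds amounts to estimating $N_s(t)$ from above and below. A one-dimensional estimate gives, for each $j$, that $\#\{h\in\ZZ: a_j|h|^{b_j}\le t\}$ behaves like a constant times $(t/a_j)^{1/b_j}$ for large $t$ (and equals $1$ for $t<a_j$). Taking the product over $j=1,\dots,s$ and optimizing the split of the budget $t$ among coordinates (a Hölder/arithmetic–geometric argument, or just a crude product bound), one sees that $\log N_s(t)\asymp \big(\sum_{j=1}^s 1/b_j\big)\log t = B(s)\log t$ as $t\to\infty$, with lower-order corrections depending on the $a_j$. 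Inverting this relation: to make $N_s(t)\ge n$ one needs $t$ of order $n^{1/B(s)}$ up to logarithmic factors, hence $e^{\mathrm{int}}(n,s)\le \omega^{c\,n^{1/B(s)}/C_1(s)}$, which is \eqref{exrate} with $p(s)=1/B(s)$. For the matching upper bound on $p^*(s)$, I would exhibit, for any $p>1/B(s)$, a sequence of $n$ along which $N_s(t)$ grows too slowly, so that $e^{\mathrm{int}}(n,s)$ cannot decay like $\omega^{(n/C_1(s))^p}$; this uses that $N_s(t)=\Theta\big(t^{B(s)}\big)$ up to logs, so $t$ must be at least of order $n^{1/B(s)}$. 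This proves the first bullet, $p^*(s)=1/B(s)$.

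For the second bullet I would argue uniformity in $s$. UEXP asks for a single $p>0$ and functions $C,C_1$ with \eqref{exrate} holding for all $s$ simultaneously. Since by the first part the best rate in dimension $s$ is exactly $1/B(s)$ and $B(s)\nearrow B$, a common $p$ can be taken iff $B<\infty$, in which case every $p\le 1/B\le 1/B(s)$ works for all $s$, giving $p^*=1/B$; conversely if $B=\infty$ then $B(s)\to\infty$, so $1/B(s)\to0$ and no positive $p$ can satisfy \eqref{exrate} for all $s$ (for each candidate $p$ pick $s$ with $1/B(s)<p$ and invoke the sharpness from the first bullet). The only subtlety is to make sure the constants $C(s),C_1(s)$ in the upper bound can indeed be chosen to absorb the $a_j$-dependence uniformly; here one uses $a_\ast=a_1>0$ from \eqref{aabb} to get a clean $s$-dependent bound on $N_s(t)$ via the comparison $\sum_j a_j|h_j|^{b_j}\ge a_\ast\sum_j|h_j|^{b_j}$.

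The main obstacle I anticipate is the sharp two-sided estimate of the lattice-point count $N_s(t)$ with explicit control of the dependence on both $s$ and the weight sequence $\bsa$ — in particular getting the exponent of $\log t$ to be precisely $B(s)$ (not merely $O(B(s))$), since that exact exponent is what pins down $p^*(s)=1/B(s)$ rather than just an order-of-magnitude rate. Once that counting lemma is in hand, inverting it and passing to the supremum over admissible $p$ is routine, and the $s$-uniform statement in the second bullet follows formally from the monotone convergence $B(s)\uparrow B$.
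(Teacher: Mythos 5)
The paper does not prove Theorem~\ref{thmint(u)exp} itself; it quotes it from \cite{DLPW11,KPW12,DKPW13}, so your attempt has to be measured against the proofs in those papers. Your combinatorial core --- the counting function $N_s(t)=\#\{\bsh:\sum_j a_j|h_j|^{b_j}\le t\}$ and the identification $\log N_s(t)\asymp B(s)\log t$ --- is indeed the right heart of the matter and is essentially what drives the published upper bound. But the reduction you build everything on is false: for integration from standard information there is \emph{no} spectral formula $e^{\mathrm{int}}(n,s)=\omega_{\bsh_{n+1}}^{1/2}$. That identity (with $\bszero$ included) is the formula for $L_2$-approximation in the class $\lall$; integration uses only function values, and with $n$ nodes and weights you cannot annihilate an arbitrary prescribed set of the $n$ dominant frequencies while controlling the rest. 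Even when a rule is exact on a chosen set $S$, the squared error is $\sum_{\bsh\notin S\cup\{\bszero\}}\omega_{\bsh}\,|\sum_k q_k\exp(2\pi\icomp\,\bsh\cdot\bsx_k)|^2$, a weighted sum over \emph{all} remaining frequencies with aliasing factors that need not be bounded by $1$; already for $n=1$ the optimal error is $(\Sigma/(1+\Sigma))^{1/2}$ with $\Sigma=\sum_{\bsh\ne\bszero}\omega_{\bsh}$, not the second largest $\omega_{\bsh}^{1/2}$. What actually makes the upper bound work in \cite{KPW12} is the choice of a regular grid $\prod_j\{0,1/m_j,\dots,(m_j-1)/m_j\}$ with $m_j\approx(t/a_j)^{1/b_j}$, whose dual lattice $\prod_j m_j\ZZ$ avoids the set $\{\sum_j a_j|h_j|^{b_j}\le t\}$ and whose error $\bigl(\sum_{\bsh\in\prod_j m_j\ZZ\setminus\{\bszero\}}\omega_{\bsh}\bigr)^{1/2}$ can be summed to $\mathcal{O}(\omega^{t/2})$; the node count $n=\prod_j m_j\approx t^{B(s)}\prod_j a_j^{-1/b_j}$ then yields \eqref{exrate} with $p(s)=1/B(s)$.

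The more serious casualty of the false identity is the lower bound, i.e.\ the sharpness $p^*(s)\le 1/B(s)$ and the ``only if'' direction of UEXP, which you dispose of by saying $N_s(t)$ ``grows too slowly.'' Without a spectral formula this says nothing about $e^{\mathrm{int}}(n,s)$: you must produce, for \emph{arbitrary} nodes $\bsx_1,\dots,\bsx_n$, a fooling function supported on the $N_s(t)>n$ dominant characters that vanishes at all nodes and has $|I_s(f)|/\|f\|_{H(K_{s,\bsa,\bsb})}$ bounded below. The linear algebra gives such an $f$, but $I_s(f)=\widehat f(\bszero)$ may well be zero for it; this is exactly why the published arguments (and the proof of \eqref{lowest} inside Theorem~\ref{PTint} of this very paper) pass to $g=|f|^2$, for which $I_s(g)=\sum|\alpha_j|^2$ is forced to be positive, and then must control $\|g\|_{H(K_{s,\bsa,\bsb})}$ --- a step with no counterpart in your outline. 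Until you supply a lower-bound mechanism of this kind, both the formula $p^*(s)=1/B(s)$ (as opposed to $p^*(s)\ge 1/B(s)$) and the necessity of $B<\infty$ for UEXP remain unproved.
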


Theorem~\ref{thmint(u)exp} states that
we always have exponential convergence.
However, a necessary and sufficient condition
for uniform exponential convergence
is that the weights~$b_j$ go to infinity so fast
that $B:=\sum_{j=1}^\infty b_j^{-1}<\infty$, with
no extra conditions on $a_j$
and $\omega$. The largest exponent $p$ of uniform exponential
convergence is $1/B$. Hence for small $B$ the exponent $p$ is large.
For instance, for $b_j=j^{-2}$ we have $B=\pi^2/6$ and
$p^*=6/\pi^2=0.6079\dots$.

Next, we consider standard notions of tractability, (b)--(d).
They have not yet been studied for the Korobov space
$H(K_{s,\bsa,\bsb})$ and therefore we need to prove the next
theorem.

\begin{theorem}\label{PTint}
Consider integration defined over the Korobov space
$H(K_{s,\bsa,\bsb})$ with weight sequences $\bsa$ and $\bsb$
satisfying~\eqref{aabb}.
For simplicity, assume that
$$
A:=\lim_{j \rightarrow \infty} \frac{a_j}{\log j}
$$
exists.
\begin{itemize}
\item\label{SPT} SPT holds if
$A >\frac{1}{\log \omega^{-1}}$.
In this case the exponent $\tau^\ast$ of SPT satisfies
$$
\tau^\ast \le  \min\left(2, \frac{2}{A \log \omega^{-1}}\left(
1+\frac1{A \log \omega^{-1}}\right)\right).
$$
On the other hand, if we have SPT with exponent $\tau^\ast$, 
then $A \ge \frac{1}{\tau^{\ast} \log \omega^{-1}}$.
\item\label{PT}
PT holds if there is an integer $j_0\ge2$ such that
$$
 \frac{a_j}{\log\,j}\ge\frac1{\log\,\omega^{-1}}\ \ \ \ \mbox{for all}\
   \ \ \ \ j\ge j_0.
$$
\item WT holds if $\lim_{j \rightarrow \infty} a_j=\infty$.
\end{itemize}
\end{theorem}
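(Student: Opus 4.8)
The plan is to get every sufficiency assertion from one upper bound on the $n$-th minimal integration error, supplied by a lattice-rule (component-by-component) construction, and to get the converse in the SPT part from a fooling argument.

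\emph{An upper bound carrying a free exponent.} Following the analysis of this Korobov space in \cite{DLPW11, KPW12}, I would first record that for every prime $n$ and every $\lambda\in(1/2,1]$ there is a rank-$1$ lattice rule with $n$ nodes such that
\[
e^{\mathrm{int}}(n,s)^2\le\left(\frac{c}{n}\left(\prod_{j=1}^s\Big(1+2\sum_{h=1}^\infty\omega^{\lambda a_j h^{b_j}}\Big)-1\right)\right)^{1/\lambda},
\]
with $c$ an absolute constant; here the product equals $1+\sum_{\bszero\ne\bsh\in\ZZ^s}\omega_{\bsh}^{\lambda}$, and the subadditivity $(\sum x_i)^{\lambda}\le\sum x_i^{\lambda}$ for $\lambda\in(0,1]$ is what introduces $\lambda$. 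For $\lambda=1$ the same type of bound, with a different constant and with no primality restriction, already follows from a plain Monte-Carlo averaging, which is all that the PT and WT parts need. Replacing a general $n$ by the next prime costs only a bounded factor, so these bounds translate into estimates of the form $n(\e,s)=\landau\big(\prod_{j\le s}(1+2\sum_h\omega^{\lambda a_j h^{b_j}})^{1/2}\,\e^{-2\lambda}\big)$.

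\emph{The single-coordinate estimate, and reading off the three statements.} The content of the hypotheses on $\bsa,\bsb$ is funnelled through $\Theta_j(\lambda):=2\sum_{h\ge1}\omega^{\lambda a_j h^{b_j}}$. Using $0<a_1\le a_2\le\cdots$ and $b_\ast=\inf_jb_j>0$, I would show $\Theta_j(\lambda)\le C(\lambda)\,\omega^{\lambda a_j}$ with $C(\lambda)$ bounded as long as $\lambda$ stays away from $0$ (the $h=1$ term dominates, the rest being geometrically small in $\omega^{\lambda a_j}$), so $\prod_{j\le s}(1+\Theta_j(\lambda))\le\exp\big(C(\lambda)\sum_{j\le s}\omega^{\lambda a_j}\big)$, and everything reduces to $\sum_j\omega^{\lambda a_j}=\sum_j\exp(-\lambda a_j\log\omega^{-1})$, which I control via $a_j/\log j\to A$: for any $\delta>0$, $\omega^{\lambda a_j}\le j^{-\lambda(A-\delta)\log\omega^{-1}}$ for all large $j$. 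Now: if $A\log\omega^{-1}>1$, pick $\lambda\in(\max\{1/2,1/(A\log\omega^{-1})\},1]$ and $\delta$ small enough that $\lambda(A-\delta)\log\omega^{-1}>1$; then $\sum_j\omega^{\lambda a_j}<\infty$, the product is bounded uniformly in $s$, and the upper bound gives $e^{\mathrm{int}}(n,s)\le c_\lambda n^{-1/(2\lambda)}$, i.e. $n(\e,s)=\landau(\e^{-2\lambda})$ uniformly in $s$ — this is SPT, and optimising over $\lambda$ (using $\lambda=1$ for the trivial bound $\tau^\ast\le2$) yields the stated bound on $\tau^\ast$. Under the PT hypothesis $a_j\ge\log j/\log\omega^{-1}$ for $j\ge j_0$ one gets $\omega^{a_j}\le1/j$, so $\sum_{j\le s}\omega^{a_j}\le\landau(1)+\log s$, hence $\prod_{j\le s}(1+\Theta_j(1))=\landau(s^{C})$ and $n(\e,s)=\landau(s^{C}\e^{-2})$. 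Under the WT hypothesis $a_j\to\infty$ one has $\omega^{a_j}\to0$, hence $\sum_{j\le s}\omega^{a_j}=o(s)$, $\prod_{j\le s}(1+\Theta_j(1))=\exp(o(s))$, and $\log n(\e,s)\le o(s)+\landau(\log\e^{-1})$, so $\log n(\e,s)/(s+\e^{-1})\to0$.

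\emph{Necessity of $A\ge1/(\tau^\ast\log\omega^{-1})$, and the main obstacle.} For the converse in the SPT part I would establish a lower bound on $n(\e,s)$ by the fooling technique used for the lower bounds in \cite{DLPW11, KPW12}: $n$ function evaluations cannot simultaneously block all of more than $\landau(n)$ nearly-optimal Fourier directions, so $e^{\mathrm{int}}(n,s)>\e$ whenever $n$ is small relative to the number of indices $\bsh$ whose weight $\omega_{\bsh}$ exceeds a threshold tied to $\e$; counting the first-order indices (those with a single component $h_j=\pm1$ and all others $0$) for which $\omega^{a_j}>\e$ and using $a_j/\log j\to A$ shows this count is $\e^{-1/(A\log\omega^{-1})+o(1)}$ once $s$ exceeds it, whence $n(\e,s)\ge\e^{-1/(A\log\omega^{-1})+o(1)}$; so every admissible SPT exponent $\tau$ obeys $\tau\ge1/(A\log\omega^{-1})$, and letting $\tau\downarrow\tau^\ast$ gives $A\ge1/(\tau^\ast\log\omega^{-1})$. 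The delicate step is precisely this lower bound: it must be valid against \emph{arbitrary}, adversarially chosen node sets, and because the weights $\omega_{\bsh}$ are extremely non-uniform one has to arrange the obstruction so that it is governed exactly by the count of large first-order weights $\omega^{a_j}$ without losing a factor in the exponent. The remaining ingredients above are elementary.
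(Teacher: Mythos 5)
Your sufficiency arguments are essentially sound and take a more direct route than the paper: the paper deduces SPT, PT and WT for integration from the corresponding $L_2$-approximation results for $\Lambda^{\rm std}$ (Theorem~\ref{PTapprox}), using that integration is no harder than approximation and that both problems have initial error one, and it obtains $\tau^*\le 2$ from exactly your $\lambda=1$ averaging bound $[e^{\rm int}(n,s)]^2\le n^{-1}\int_{[0,1]^s}K_{s,\bsa,\bsb}(\bsx,\bsx)\rd\bsx$. One quantitative slip: with $\lambda$ confined to $(1/2,1]$ your optimization can never certify an exponent below $1$, whereas the stated bound $\frac{2}{A\log\omega^{-1}}(1+\frac{1}{A\log\omega^{-1}})$ drops below $1$ once $A\log\omega^{-1}>1+\sqrt3$; so as written you do not recover the claimed bound on $\tau^*$ for large $A$. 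Nothing in this space forces $\lambda>1/2$ (the weights $\omega_{\bsh}^{\lambda}$ are summable for every $\lambda>0$), so either drop that restriction --- which in fact yields the sharper bound $2/(A\log\omega^{-1})$ --- or import the exponent bound from Theorem~\ref{PTapprox} as the paper does.

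The genuine gap is the necessity direction. The principle you invoke --- $n$ evaluations cannot block more than $\mathcal{O}(n)$ large-weight Fourier directions, hence $e^{\rm int}(n,s)>\e$ whenever $n$ is below the count of indices with $\omega_{\bsh}$ above a threshold --- is the eigenvalue-counting argument for $L_2$-approximation and does not transfer to integration as stated: every nonconstant character $\exp(2\pi\icomp\,\bsh\cdot\bsx)$ has integral zero, so a unit-norm function supported on many unblocked large-weight directions can still have arbitrarily small integral. Concretely, with $m+1$ modes and $n<m$ interpolation conditions the solution space of the linear system has dimension at least $m+1-n$, but nothing prevents it from lying in the hyperplane where the coefficient of the constant mode vanishes, and then your fooling candidate integrates to $0$. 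You correctly flag this as the delicate step, but that step is the entire content of the converse and you do not supply it. The paper's resolution is a squaring trick: with $\bsh^{(0)}=\bszero$ and $\bsh^{(j)}$ the $j$th unit vector, take $f=\sum_{j=0}^s\alpha_j\exp(2\pi\icomp\,\bsh^{(j)}\cdot\bsx)$ with $\sum_j|\alpha_j|^2=1$ vanishing at the $s$ nodes, and pass to $g=|f|^2$, which still vanishes at the nodes, satisfies $I_s(g)=\sum_j|\alpha_j|^2=1$ exactly because the cross terms integrate to zero, and has $\|g\|_{H(K_{s,\bsa,\bsb})}\le\sqrt{1+\omega^{-2a_s}}$. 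This gives $e^{\rm int}(s,s)\ge\omega^{a_s}/\sqrt{1+\omega^{2a_s}}$, and choosing $s=n(\e)\asymp\e^{-(\tau^*+\delta)}$ forces $\omega^{a_s}\lesssim\e$ and hence $A\ge 1/((\tau^*+\delta)\log\omega^{-1})$. Without this device (or an equivalent one) your count of large first-order weights does not by itself yield a lower bound on $n(\e,s)$.
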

\begin{proof}
It is well known that integration is no harder than
$L_2$-approximation for the class $\lstd$. For the Korobov
class the initial errors of integration and approximation are $1$.
Therefore the corresponding notions of tractability for approximation
imply the same notions of tractability for integration.
From Theorem~\ref{PTapprox}, presented in the next section, we thus conclude SPT, PT and WT also for
integration. The second bound on the exponent $\tau^*$ of SPT also
follows from Theorem~\ref{PTapprox}.
It remains to prove that
$\tau^*\le 2$. It is known, see, e.g., \cite[Theorem 10.4]{NW10},
that
$$
[e^{\rm int}(n,s)]^2\le
\frac1{n}\,\int_{[0,1]^s}K_{s,\bsa,\bsb}(\bsx,\bsx)\,{\rm
  d}\bsx\le\frac1n\,\prod_{j=1}^s\left(1+
2\sum_{h=1}^\infty
\omega^{a_jh^{b^*}}\right).
$$
It is shown in the proof of Theorem~\ref{PTapprox} (with $\tau=1$)
that $A>0$ implies the existence of $C\in(0,\infty)$ such that
$2\sum_{h=1}^\infty\omega^{a_jh^{b^*}}\le C\,\omega^{a_j}$.
Therefore
$$
[e^{\rm int}(n,s)]^2\le
\frac1n\prod_{j=1}^s\left(1+C\sum_{j=1}^\infty\omega^{a_j}\right)\le
 \frac1n\,
\exp\left(C\sum_{j=1}^s\omega^{a_j}\right).
$$
Note that for $j\ge2$ we have
$\omega^{a_j}=j^{-a_j\,(\log \omega^{-1})/\log j}$.
Since $A>1/(\log\,\omega^{-1})$ for large~$j$ we conclude that
$\omega^{a_j}\le j^{-\beta}$ with $\beta\in(1,A \log\,\omega^{-1})$.
Hence $\sum_{j=1}^\infty \omega^{a_j}<\infty$ and
$e(n,s)\le \e$ for $n=\mathcal{O}(\e^{-2})$ with the factor in the big
$\mathcal{O}$ notation independent of~$s$. This implies SPT with the
exponent at most $2$. 

It remains to show the necessary condition for SPT with exponent $\tau^\ast$.
First of all we show the estimate 
\begin{equation}\label{lowest}
e^{\mathrm{int}}(s,s)\ge\frac{\omega^{a_s}}{\sqrt{1+\omega^{2a_s}}} \ \ \ \ \
\mbox{for all}\ \ \ \ \ s\in\NN.
\end{equation}
Let $\bsh^{(0)}=(0,0,\dots,0)\in\ZZ^s$. For $j=1,2,\dots,s$,  let
$$
\bsh^{(j)}=(0,0,\dots,1,0,\dots,0)\in\ZZ^s\ \ \
\mbox{with $1$ on the $j$th place}.
$$
For $\bsh\in\ZZ^s$, let
$$
c_{\bsh}(\bsx)=\exp\left(2\pi\,{\rm i}\,\sum_{j=1}^sh_jx_j\right)\ \ \ \ \
\mbox{for all}\ \ \ \ \ \bsx\in[0,1]^s.
$$
For $j=0,1,\dots,s$, note that
$c_{\bsh^{(j)}}(\bsx)=\exp(2\,\pi\,{\rm i}\,
x_j)$ and
$$
c_{\bsh^{(j)}}(\bsx)\,\overline{c_{\bsh^{(k)}}(\bsx)}=
c_{\bsh^{(j)}-\bsh^{(k)}}(\bsx).
$$

Consider the function
$$
f(\bsx)=\sum_{j=0}^s \alpha_j\,c_{\bsh^{(j)}}(\bsx)\ \ \ \ \
\mbox{for all}\ \ \ \ \ \bsx\in[0,1]^s
$$
for some complex numbers $\alpha_j$.

We know that adaption does not help for the integration problem.
Suppose that we sample functions at $s$ nonadaptive points
$\bsx_1,\bsx_2,\dots,\bsx_s\in[0,1]^s$. We choose numbers $\alpha_j$ such that
$$
f(\bsx_j)=0\ \ \ \ \ \mbox{for all}\ \ \ \ \ j=1,2,\dots,s.
$$
This corresponds to $s$ homogeneous linear equations in $(s+1)$ unknowns.
Therefore there exists a nonzero solution $\alpha_0,\alpha_1,\dots,\alpha_s$
which we may normalize such that
$$
\sum_{j=0}^s|\alpha_j|^2=1.
$$
Let
$$
g(\bsx)=\overline{f(\bsx)}\,f(\bsx)=\sum_{j,k=0}^s\alpha_j\,
\overline{\alpha}_k\,c_{\bsh^{(j)}-\bsh^{(k)}}(\bsx)\ \ \ \ \
\mbox{for all}\ \ \ \ \ \bsx\in[0,1]^s.
$$
Clearly,
$g(\bsx_j)=0$ for all $j=1,2,\dots,s$. Since $I_s(c_{\bsh^{(j)}-\bsh^{(k)}})=0$
for $j\not=k$ and $1$ for $j=k$ we obtain
$$
I_s(g)=\sum_{j=0}^s|\alpha_j|^2=1.
$$
Now it follows from \eqref{wellknown} that
$$
e^{\mathrm{int}}(s,s)\ge 
I_s\left(\frac{g}{\|g\|_{H(K_{s,\bsa,\bsb})}}\right)=
\frac1{\|g\|_{H(K_{s,\bsa,\bsb})}}.
$$
This is why we need to estimate the norm of $g$ from above.
Note that
\begin{eqnarray*}
\|g\|_{H(K_{s,\bsa,\bsb})}^2&=&
\il g,g\ir_{H(K_{s,\bsa,\bsb})}\\&=&\il
\sum_{j_1,k_1=0}^s\alpha_{j_1}\,\overline{\alpha_{k_1}}\,
c_{\bsh^{(j_1)}-\bsh^{(k_1)}},
\sum_{j_2,k_2=0}^s\alpha_{j_2}\,\overline{\alpha_{k_2}}\,
c_{\bsh^{(j_2)}-\bsh^{(k_2)}}\ir_{H(K_{s,\bsa,\bsb})} \\
&=&
\sum_{j_1,k_1,j_2,k_2=0}^s\alpha_{j_1}\,\overline{\alpha_{j_2}}\,
\overline{\alpha_{k_1}}\,\alpha_{k_2}\,
\il c_{\bsh^{(j_1)}-\bsh^{(k_1)}},
c_{\bsh^{(j_2)}-\bsh^{(k_2)}}\ir_{H(K_{s,\bsa,\bsb})}.
\end{eqnarray*}
For $\bsh^{(j_1)}-\bsh^{(k_1)}\not=\bsh^{(j_2)}-\bsh^{(k_2)}$ we have
$$
\il c_{\bsh^{(j_1)}-\bsh^{(k_1)}},
c_{\bsh^{(j_2)}-\bsh^{(k_2)}}\ir_{H(K_{s,\bsa,\bsb})}=0,
$$
whereas for $\bsh^{(j_1)}-\bsh^{(k_1)}=\bsh^{(j_2)}-\bsh^{(k_2)}$ we have
$$
\il c_{\bsh^{(j_1)}-\bsh^{(k_1)}},
c_{\bsh^{(j_2)}-\bsh^{(k_2)}}\ir_{H(K_{s,\bsa,\bsb})}=
\omega^{-1}_{\bsh^{(j_1)}-\bsh^{(k_1)}}.
$$
Therefore it is enough to consider
$$
\bsh^{(j_1)}-\bsh^{(k_1)}=\bsh^{(j_2)}-\bsh^{(k_2)}.
$$
Suppose first that $j_1\not=k_1$. Then
$\bsh^{(j_1)}-\bsh^{(k_1)}=\bsh^{(j_2)}-\bsh^{(k_2)}$ implies that
$j_2=j_1$ and $k_2=k_1$ and
$$
\omega^{-1}_{\bsh^{(j_1)}-\bsh^{(k_1)}}=\omega^{-a_{j_1}-a_{k_1}}.
$$
On the other hand, if $j_1=k_1$ then
$\bsh^{(j_1)}-\bsh^{(k_1)}=\bsh^{(0)}$ which implies that $j_2=k_2$ and
$$
\omega^{-1}_{\bsh^{(j_1)}-\bsh^{(k_1)}}=1.
$$
Therefore
\begin{eqnarray*}
\|g\|_{H(K_{s,\bsa,\bsb})}^2&=&
\sum_{j_1,k_1,j_2,k_2=0, \ \bsh^{(j_1)}-\bsh^{(k_1)}=
\bsh^{(j_2)}-\bsh^{(k_2)}}^s\alpha_{j_1}\,
\overline{\alpha_{j_2}}\,\overline{\alpha_{k_1}}\,
\alpha_{k_2}\,\omega_{\bsh^{(j_1)}-\bsh^{(k_1)}}\\
&=&
\sum_{j_1=0}^s\,\sum_{k_1=0,k_1\not=j_1}^s|\alpha_{j_1}|^2\,|\alpha_{k_1}|^2\,
\omega^{-a_{j_1}-a_{k_1}}\, +\, 
\sum_{j_1=0}^s|\alpha_{j_1}|^2\,\sum_{j_2=0}^s|\alpha_{j_2}|^2\\
&=&\sum_{j=0}^s|\alpha_j|^2\omega^{-a_j}\,\left(-|\alpha_j|^2\omega^{-a_j}\,+\,
\sum_{k=0}^s|\alpha_k|^2\omega^{-a_k}\right)+1\\
&\le& \left(\sum_{j=0}^s|\alpha_j|^2\omega^{-a_j}\right)^2+1
\le \omega^{-2a_s}+1.
\end{eqnarray*}
Hence,
$$
\|g\|_{H(K_{s,\bsa,\bsb})}\le
\sqrt{1+\omega^{-2a_s}}=\frac{\sqrt{1+\omega^{2a_s}}}{\omega^{a_s}}.
$$
Finally,
$$
e^{\mathrm{int}}(s,s)\ge 
\frac1{\|g\|_{H(K_{s,\bsa,\bsb})}}\ge 
\frac{\omega^{a_s}}{\sqrt{1+\omega^{2a_s}}},
$$
and thus \eqref{lowest} is shown.

Assume that we have SPT with the exponent $\tau^*$. This means that
for any positive $\delta$ there exists a positive number $C_\delta$ such that
$$
n(\e,s)\le C_\delta\,\e^{-(\tau^*+\delta)}\ \ \ \ \
\mbox{for all}\ \ \ \ \ \e\in(0,1),\ s\in\NN.
$$
Let $n=n(\e):=\lfloor C_{\delta}\,\e^{-(\tau^*+\delta)}\rfloor$. 
Then
$$
e^{\mathrm{int}}(n(\e),s)\le \e
\ \ \ \ \ \mbox{for all}\ \ \ \ \ s\in\NN.
$$
Taking 
$s=n(\e)$, we conclude from~\eqref{lowest} that 
$$
\frac{\omega^{a_s}}{\sqrt{1+\omega^{2a_s}}}
\le e^{\mathrm{int}}(s,s)\le \e,
$$
which implies
$$
(1-\e^2)\omega^{2a_s} \le \e^2.
$$
Taking logarithms this means that
$$
\frac{a_s}{\log\,\e^{-1}}\ge \frac{1+o(1)}{\log\,\omega^{-1}}\ \ \ \ \ 
\mbox{as}\ \ \ \ \ \e\to0.
$$
Since $\log\,\e^{-1}=(1+o(1))(\tau^*+\delta)^{-1}\,\log\,s$ we finally
have 
$$
A=\lim_{s\to \infty}\frac{a_s}{\log\,s}\ge \frac1{(\tau^*+\delta)\,\log\,
\omega^{-1}}.
$$
Since $\delta$ can be arbitrarily small, the proof is completed.
\end{proof}

We stress that for integration we only know sufficient conditions on $\bsa$ and
$\bsb$ for the standard
notions PT and WT. Obviously, it would be
welcome to find also necessary conditions and verify if they match
the conditions presented in the last theorem. 
For SPT we have a sufficient condition and a necessary condition, but
there remains a (small) gap between these. 
Again, it would be welcome to find matching sufficient and 
necessary conditions for SPT.
Note that it may happen that $A=\infty$. This happens when $a_j$'s go
to infinity faster than $\log\,j$. In this case, the exponent
of SPT is zero. This means that for any positive $\delta$, no matter
how small, $n(\e,s)=\mathcal{O}(\e^{-\delta})$ with the factor in the
big $\mathcal{O}$ notation independent of~$s$.
We also stress that the conditions on
all standard notions of tractability depend only on $\bsa$ and are
independent of $\bsb$.

Finally, we have a result regarding
the EC notions of tractability, (d)--(f).
The subsequent theorem follows by combining
the findings in \cite{KPW12} and \cite[Section~9]{DKPW13}.

\begin{theorem}\label{thmintectract}
Consider integration defined over the Korobov space
$H(K_{s,\bsa,\bsb})$ with weight sequences $\bsa$ and
$\bsb$ satisfying~\eqref{aabb}.
Then the following results hold:
\begin{itemize}
\item\label{intecpt}
EC-PT (and, of course, EC-SPT) implies UEXP.
\item\label{intecwt}
We have
\begin{eqnarray*}
\mbox{EC-WT}\ &\Leftrightarrow&\ \lim_{j\to\infty}a_j=\infty,\\
\mbox{EC-WT+UEXP}\ &\Leftrightarrow&\  B<\infty\ \ \mbox{and}\ \
\lim_{j\to\infty}a_j=\infty.
\end{eqnarray*}
\item\label{intecequiv}
The following notions are equivalent:
\begin{multline*}
\ \ \ \ \mbox{EC-PT}\ \Leftrightarrow\ \mbox{EC-PT+EXP}\ \Leftrightarrow\
\mbox{EC-PT+UEXP} \\ \Leftrightarrow \mbox{EC-SPT}\ \Leftrightarrow\
\mbox{EC-SPT+EXP}\ \Leftrightarrow\ \mbox{EC-SPT+UEXP}.
\end{multline*}
\item \label{intecspt}
EC-SPT+UEXP holds iff $b_j^{-1}$'s are summable and $a_j$'s are
exponentially large in~$j$, i.e.,
$$
B:=\sum_{j=1}^\infty\frac1{b_j}<\infty\quad\mbox{and}\quad
\alpha^*:=\liminf_{j\to\infty}\frac{\log\,a_j}j>0.
$$
Then the exponent $\tau^*$ of EC-SPT satisfies
$$
\tau^*\in \left[B,B+\min\left(B,\frac{\log\,3}{\alpha^*}\right)\right].
$$
In particular, if $\alpha^* =\infty$ then $\tau^* = B$.
\end{itemize}
\end{theorem}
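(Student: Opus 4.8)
I would prove the four items by combining the convergence statements of Theorem~\ref{thmint(u)exp}, the lattice-rule analysis of \cite{KPW12, DKPW13}, and a linear-algebra/counting argument of the type used in the proof of Theorem~\ref{PTint}. \textbf{Item (i) and a reduction of item (iii).} The first item is a direct inversion of the complexity bound: if $n(\e,s)\le c\,s^{\tau_1}(1+\log\e^{-1})^{\tau_2}$, then setting $n$ equal to the right-hand side and solving for $\e$ gives $e^{\mathrm{int}}(n,s)\le\ee^{-(n/(c\,s^{\tau_1}))^{1/\tau_2}}$, i.e.\ \eqref{exrate} with the constant exponent $p=1/\tau_2$; hence EC-PT, and a fortiori EC-SPT, implies UEXP, exactly as observed after the definition of EC-PT. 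Since trivially EC-SPT $\Rightarrow$ EC-PT and, by the preceding, EC-PT $\Rightarrow$ UEXP $\Rightarrow$ EXP, every additional EXP or UEXP requirement appearing in the chain of item (iii) is automatic, so item (iii) reduces to the single implication EC-PT $\Rightarrow$ EC-SPT, which I would settle at the very end.

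\textbf{Sufficiency (upper bounds).} The technical engine is the component-by-component rank-$1$ lattice rule constructed in \cite{KPW12} and refined in \cite[Section~9]{DKPW13}, whose worst-case error satisfies a bound of the form $e^{\mathrm{int}}(n,s)\le C(s)\,q^{(n/C_1(s))^{p(s)}}$ with $p(s)$ governed by $1/B(s)$ and with $C(s),C_1(s)$ assembled from the one-dimensional quantities $1+2\sum_{h\ge1}\omega^{\lambda a_j h^{b_j}}$ (for an auxiliary parameter $\lambda$). Substituting this into \eqref{exrate2} yields: EC-WT as soon as $a_j\to\infty$; additionally UEXP (hence EC-WT $+$ UEXP) once $B<\infty$, with rate $1/B$; and additionally EC-SPT once $\alpha^*>0$ --- the crux being that $\alpha^*>0$ makes the $a_j$ grow exponentially in $j$, so that $\sum_{j}\omega^{\lambda a_j}<\infty$ for every $\lambda>0$ and the products defining $C(s)$ and $C_1(s)$ remain bounded in $s$. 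Quantifying those constants --- each factor being at most $1+2\omega^{\lambda a_j}\le 3$ for all large $j$ --- gives the exponent estimate $\tau^*\le B+\min(B,(\log 3)/\alpha^*)$, collapsing to $\tau^*=B$ when $\alpha^*=\infty$. The complementary bound $\tau^*\ge B$ is immediate from item (i): EC-SPT with exponent $\tau$ gives UEXP with constant rate $1/\tau$, and Theorem~\ref{thmint(u)exp} then forces $1/\tau\le p^*=1/B$.

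\textbf{Necessity (lower bounds) and conclusion.} Here I would exploit the monotonicity $0<a_1\le a_2\le\cdots$ of \eqref{aabb} together with the device from the proof of Theorem~\ref{PTint}. If $a_j\not\to\infty$, monotonicity forces $a_j\le M$ for all $j$; taking the $2^{s}$ Fourier modes $\bsh$ with entries in $\{0,1\}$ --- so that every difference $\bsh-\bsh'$ has $|h_j-h_j'|\le 1$ and hence $\omega_{\bsh-\bsh'}^{-1}\le\omega^{-Ms}$ --- and forcing a nonzero combination $f$ of the corresponding exponentials to vanish at $n<2^{s}$ quadrature nodes, the function $g=\overline{f}\,f$ vanishes at those nodes, has $I_s(g)=1$, and satisfies $\|g\|_{H(K_{s,\bsa,\bsb})}^2\le 3^{s}\omega^{-Ms}$; therefore $e^{\mathrm{int}}(n,s)\ge 3^{-s/2}\omega^{Ms/2}$ for all $n<2^{s}$, and choosing $\e=\e_s:=3^{-s/2}\omega^{Ms/2}$ yields $n(\e_s,s)\ge 2^{s}$ with $s+\log\e_s^{-1}$ only of order $s$, contradicting EC-WT; thus EC-WT $\Rightarrow a_j\to\infty$, and adjoining UEXP with Theorem~\ref{thmint(u)exp} gives the EC-WT $+$ UEXP equivalence. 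The same device applied to the first $s$ coordinates (all with $a_j\le a_s$) shows that EC-PT forces $2^{s}$ to be bounded by $c\,s^{\tau_1}(1+\log\e^{-1})^{\tau_2}$ with $\log\e^{-1}$ of order $s\,a_s$, whence $a_s=\exp(\Omega(s))$ and so $\alpha^*>0$; combined with EC-PT $\Rightarrow$ UEXP $\Rightarrow B<\infty$ and the sufficiency step, this gives EC-PT $\Rightarrow$ EC-SPT and completes items (iii) and (iv). The main obstacle is the quantitative control of $C(s)$ and $C_1(s)$ in the lattice-rule bound --- keeping them bounded in $s$ when $\alpha^*>0$ while pinning the exponent down to $B+\min(B,(\log 3)/\alpha^*)$ --- and the matching choice of frequency set on the lower-bound side so that $\|g\|_{H(K_{s,\bsa,\bsb})}$ stays small yet the forced complexity is exponential in $s$; these are exactly the technical hearts of \cite{KPW12} and \cite{DKPW13}.
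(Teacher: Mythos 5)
First, a point of reference: the paper contains no proof of this theorem. It is stated as following ``by combining the findings in \cite{KPW12} and \cite[Section~9]{DKPW13}'', so there is no in-paper argument to compare yours against step by step. That said, your skeleton is the right one and is consistent both with the paper's surrounding remarks and with what the cited papers do: item (i) by inverting the EC-PT bound is exactly the observation the paper makes after definition (g); the reduction of item (iii) to the single implication EC-PT $\Rightarrow$ EC-SPT is correct; your lower bounds via a nonzero trigonometric polynomial forced to vanish at the nodes together with the product trick $g=\overline{f}\,f$ are precisely the device the paper itself uses in the necessity part of Theorem~\ref{PTint}, and your extension of it to the frequency set $\{0,1\}^s$ does yield both the necessity of $a_j\to\infty$ for EC-WT and the necessity of $\alpha^*>0$ for EC-PT/EC-SPT, using the monotonicity in \eqref{aabb}. (One cosmetic slip: the constructive upper bounds in \cite{KPW12,DKPW13} come from tensor-product grids with coordinate-dependent mesh sizes, as the paper notes at the end of Section~\ref{secapp}, not from component-by-component lattice rules.)

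The one genuine gap is the sufficiency direction of EC-WT. You assert that ``substituting into \eqref{exrate2} yields EC-WT as soon as $a_j\to\infty$'', but the generic bound \eqref{exrate2} cannot deliver EC-WT here: without $B<\infty$ one only has $p(s)=1/B(s)$ with $B(s)$ of order $s$, so \eqref{exrate2} gives $\log n(\e,s)\le \log C_1(s)+B(s)\,\log\log\left(C(s)\e^{-1}\right)+O(1)$, and along the diagonal $\log\e^{-1}\asymp s$ the quotient $\log n(\e,s)/(s+\log\e^{-1})$ behaves like $\log\log s$ and does not tend to zero. The actual argument in \cite{KPW12} requires an $\e$- and $s$-dependent allocation of the grid sizes that exploits $a_j\to\infty$ to deactivate all but a vanishing proportion of the coordinates; this is not a formal consequence of exponential convergence. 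A similar, milder vagueness affects your exponent bound $\tau^*\le B+\min\left(B,\frac{\log 3}{\alpha^*}\right)$: the inner $B$ in the minimum is not explained by ``each factor is at most $3$'' alone and needs the bookkeeping of \cite[Section~9]{DKPW13}. Everything else, in particular the lower-bound constructions and the logical reductions, is sound.
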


Theorem~\ref{thmintectract} states that EC-PT implies UEXP
and hence $B<\infty$. The notion of EC-PT is
therefore stronger than the notion of uniform exponential
convergence.
EC-WT holds if and only if the $a_j$'s tend to infinity.
This holds independently of the weights~$\bsb$
and independently of the rate of convergence of $\bsa$ to infinity.
As already shown, this implies that \eqref{eqlimit} holds.
Furthermore,
EC-WT+UEXP holds if additionally $B< \infty$.
Hence for $\lim_j a_j=\infty$ and $B=\infty$, EC-WT holds without
UEXP. It is a bit surprising
that the notions of EC-tractability with uniform exponential
convergence are equivalent. Necessary and sufficient
conditions for EC-SPT with uniform
exponential convergence are $B< \infty$ and $\alpha^{\ast}>0$.
The last condition means that $a_j$'s are exponentially large
in $j$ for large $j$.

\medskip

\section{$L_2$-approximation in $H(K_{s,\bsa,\bsb})$}\label{secapp}

Let us now turn to approximation in the space $H(K_{s,\bsa,\bsb})$.
We study $L_2$-approximation of functions from
$H(K_{s,\bsa,\bsb})$. This problem is defined as an approximation
of the embedding from the space $H(K_{s,\bsa,\bsb})$ to the space
$L_2([0,1]^s)$, i.e.,
$$
{\rm EMB}_s:H(K_{s,\bsa,\bsb}) \rightarrow L_2([0,1]^s)\ \ \
\mbox{given by}\ \ \ {\rm EMB}_s(f)=f.
$$
In terms of the notation in Section~\ref{sectractability},
$S_s(f)={\rm EMB}_s(f)=f$ for $f\in H(K_{s,\bsa,\bsb})$,
and $G_s=L_2 ([0,1]^s)$.

Without loss of generality, see
again \cite{NW08,TWW88},
we approximate ${\rm EMB}_s$ by  linear algorithms~$A_{n,s}$ of the form
\begin{equation}\label{linalg}
  A_{n,s}(f) = \sum_{k=1}^{n}\alpha_k L_k(f)\ \ \ \
\mbox{for} \ \ \ \ \ f \in H(K_{s,\bsa,\bsb}),
\end{equation}
where each $\alpha_k$ is a function from $L_{2}([0,1]^{s})$ and
each $L_k$ is a continuous linear functional defined on $H_s$
from a permissible class $\Lambda$ of information,
$\Lambda\in\{\lall,\lstd\}$.
Since $H(K_{s,\bsa,\bsb})$ is a reproducing kernel Hilbert space,
function evaluations are continuous linear functionals and therefore
$\Lambda^{\mathrm{std}}\subseteq \Lambda^{\mathrm{all}}$.

Let $e^{L_2-\mathrm{app},\Lambda}(n,s)$ be the $n$th minimal
worst-case error,
$$
e^{L_2-\mathrm{app},\Lambda}(n,s) = \inf_{A_{n,s}}
e^{L_2-\mathrm{app}}(H(K_{s,\bsa,\bsb}),A_{n,s}),
$$
where the infimum is taken
over all linear algorithms $A_{n,s}$ of the form \eqref{linalg} 
using information
from the class $\Lambda\in\{\Lambda^{\mathrm{all}},\Lambda^{\mathrm{std}}\}$.
For $n=0$ we simply approximate $f$ by zero, and
the initial error is
$$
e^{L_2-\mathrm{app},\Lambda}(0,s) = \|{\rm EMB}_s\|=
\sup_{f \in H(K_{s,\bsa,\bsb}) \atop \norm{f}_{H(K_{s,\bsa,\bsb})}\le
  1}
\norm{f}_{L_{2}([0,1]^s)} = 1.
$$
This means that also $L_2$-approximation is well normalized for all
$s\in\NN$.

Let us now outline the main results
regarding $L_2$-approximation in $H(K_{s,\bsa,\bsb})$.
Again, we start with results on EXP and UEXP. The following result was
proved in~\cite{DKPW13}.

\begin{theorem}\label{thmapp(u)exp}
Consider $L_2$-approximation defined over the Korobov space
$H(K_{s,\bsa,\bsb})$ with weight sequences $\bsa$ and $\bsb$
satisfying~\eqref{aabb}.
Then the following results hold for both classes
$\Lambda^{\rm{all}}$ and $\Lambda^{\rm{std}}$:
\begin{itemize}
\item\label{appexp}
EXP holds for all considered $\bsa$ and $\bsb$ with
$$
p^{*}(s)=\frac{1}{B(s)} \ \ \ \ \
\mbox{with}\ \ \ \ \ B(s):=\sum_{j=1}^s\frac1{b_j}.
$$
\item\label{appuexp}
UEXP holds iff $\bsa$ is an arbitrary sequence and
$\bsb$ is such that
$$
B:=\sum_{j=1}^\infty\frac1{b_j}<\infty.
$$
If so then $p^*=1/B$.
\end{itemize}
\end{theorem}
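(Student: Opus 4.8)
The plan is to reduce the whole statement, for both classes, to the decay of the eigenvalues of the approximation operator and then to an elementary lattice-point count; the $\lstd$ lower bounds will be inherited from the $\lall$ analysis and the $\lall$ upper bounds from the $\lstd$ ones, so there are really two nontrivial ingredients: a counting/eigenvalue estimate and a lattice-rule construction.

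For $\lall$, approximating ${\rm EMB}_s$ is a linear problem between Hilbert spaces, so it is classical (see, e.g., \cite{NW08,TWW88}) that $e^{L_2-\mathrm{app},\lall}(n,s)^2$ equals the $(n+1)$st largest eigenvalue of $W_s:={\rm EMB}_s^\ast{\rm EMB}_s$. Using the complete orthonormal basis $\{e_{\bsh}\}_{\bsh\in\ZZ^s}$ from \eqref{basis} one checks directly that $W_se_{\bsh}=\omega_{\bsh}\,e_{\bsh}$, so the eigenvalues of $W_s$ are exactly the numbers $\omega_{\bsh}=\omega^{\sum_{j=1}^s a_j|h_j|^{b_j}}$, $\bsh\in\ZZ^s$ (the largest being $\omega_{\bszero}=1$, which re-confirms $e(0,s)=1$); hence $e^{L_2-\mathrm{app},\lall}(n,s)^2$ is the $(n+1)$st largest element of the multiset $\{\omega_{\bsh}:\bsh\in\ZZ^s\}$. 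I would control its decay via the counting function $N_s(T):=\#\{\bsh\in\ZZ^s:\sum_{j=1}^s a_j|h_j|^{b_j}\le T\}$: since $\omega\in(0,1)$, the $(n+1)$st largest $\omega_{\bsh}$ is $\ge\omega^{T}$ precisely when $N_s(T)\ge n+1$. An elementary box estimate, e.g.
\[
\prod_{j=1}^s\#\{h\in\ZZ:a_j|h|^{b_j}\le T/s\}\ \le\ N_s(T)\ \le\ \prod_{j=1}^s\#\{h\in\ZZ:a_j|h|^{b_j}\le T\},
\]
gives constants $0<c_2(s)\le c_1(s)<\infty$, depending on $s,\bsa,\bsb$, with $c_2(s)\,T^{B(s)}\le N_s(T)\le c_1(s)\,(1+T)^{B(s)}$ for all $T\ge0$, where $B(s)=\sum_{j=1}^s1/b_j$ is finite since $b_j\ge b_\ast>0$ by \eqref{aabb}. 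Inverting these inequalities converts the eigenvalue identity into
\[
\widetilde{C}(s)\,\omega^{(n/\widetilde{C}_1(s))^{1/B(s)}}\ \le\ e^{L_2-\mathrm{app},\lall}(n,s)\ \le\ C(s)\,\omega^{(n/C_1(s))^{1/B(s)}}\qquad\mbox{ for all }n\in\NN,
\]
with $s$-dependent constants (small $n$ absorbed, using also $e(n,s)\le e(0,s)=1$). The right inequality shows $p(s)=1/B(s)$ satisfies \eqref{exrate}; taking logarithms in the left one and letting $n\to\infty$ shows no larger $p(s)$ can. Hence EXP holds with $p^\ast(s)=1/B(s)$ for $\lall$.

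For $\lstd\subseteq\lall$ the lower bound, and hence $p^\ast(s)\le1/B(s)$, is immediate from $e^{L_2-\mathrm{app},\lstd}(n,s)\ge e^{L_2-\mathrm{app},\lall}(n,s)$, so only the matching upper bound needs a genuine construction. I would obtain it from rank-$1$ lattice rules: for the threshold set $A_\theta:=\{\bsh\in\ZZ^s:\omega_{\bsh}\ge\theta\}$, of cardinality of order $(\log\theta^{-1})^{B(s)}$ by the count above, one constructs (for instance component by component) for each fixed $\delta>0$ a generating vector and a lattice of $n$ points, with $n$ at most a constant times $|A_\theta|^{1+\delta}$, on which the Fourier coefficients $\widehat f(\bsh)$, $\bsh\in A_\theta$, are recovered with an aliasing error governed by the $\omega$-decay along the dual lattice; the trigonometric-polynomial approximant $\sum_{\bsh\in A_\theta}\widehat f(\bsh)\exp(2\pi\icomp\,\bsh\cdot\bsx)$ then has $L_2$-error at most a polynomial-in-$n$ factor times $\sqrt\theta$. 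Since $n\le(\mathrm{const})\,|A_\theta|^{1+\delta}$ forces $\log\theta^{-1}$ to be at least of order $n^{1/((1+\delta)B(s))}$, this yields, after absorbing the polynomial factor, an error bound $C_{s,\delta}\,\omega^{(n/C_{1,s,\delta})^{1/((1+\delta)B(s))}}$; thus $p(s)=1/((1+\delta)B(s))$ satisfies \eqref{exrate} for every $\delta>0$, and letting $\delta\to0$ gives $p^\ast(s)=1/B(s)$ for $\lstd$ as well. (This construction was carried out in \cite{DKPW13}; alternatively one may invoke a transference bound expressing $\lstd$-errors through partial sums $\sum_{k>m}\lambda_k$ of the ordered eigenvalues of $W_s$, since here those tails exceed $\lambda_m$ by at most a polynomial factor.)

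Finally, for UEXP, note that $s\mapsto B(s)$ is nondecreasing with $\sup_sB(s)=\lim_sB(s)=B$. If UEXP holds with a constant rate $p>0$, then for every fixed $s$ the constant function $p(s)\equiv p$ is admissible in \eqref{exrate}, so $p\le p^\ast(s)=1/B(s)$; letting $s\to\infty$ forces $p\le1/B$, which in particular requires $B<\infty$ and shows UEXP fails when $B=\infty$. Conversely, if $B<\infty$ then $1/B\le1/B(s)$, and for $n\ge C_1(s)$ we have $(n/C_1(s))^{1/B}\le(n/C_1(s))^{1/B(s)}$, so the $\lall$ bound above gives $e^{L_2-\mathrm{app},\lall}(n,s)\le C(s)\,\omega^{(n/C_1(s))^{1/B}}$ (and the $\lstd$ bound gives the analogue with $B$ replaced by $(1+\delta)B$, any $\delta>0$); after enlarging $C(s),C_1(s)$ to cover $n<C_1(s)$ this is \eqref{exrate} with a constant rate, so UEXP holds, and combined with $p^\ast(s)=1/B(s)\ge1/B$ we get $p^\ast=1/B$ for both classes. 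The genuinely technical steps are the lattice-point asymptotics $N_s(T)=\Theta(T^{B(s)})$ — elementary, but requiring care with the $s$-dependence of the constants — and, above all, the $\lstd$ upper bound, which is the real obstacle since it needs an actual construction (a good lattice rule, or a suitable transference result from $\lall$) rather than just the spectral theory available for $\lall$.
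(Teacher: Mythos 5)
First, note that the survey does not reprove this theorem; it cites \cite{DKPW13}. Your overall architecture — reduce the $\lall$ case to the ordered eigenvalues $\omega_{\bsh}$ of $W_s={\rm EMB}_s^*{\rm EMB}_s$, control their decay through the counting function $N_s(T)=\#\{\bsh\in\ZZ^s:\sum_j a_j|h_j|^{b_j}\le T\}=\Theta(T^{B(s)})$, and then treat $\lstd$ by an explicit sampling construction whose cost is comparable to $\#A_\theta$ — is exactly the right one, and your $\lall$ argument together with the UEXP bookkeeping ($p\le\inf_s p^*(s)=1/B$ for the necessity of $B<\infty$, monotonicity of $x\mapsto x^{1/B}$ versus $x^{1/B(s)}$ for $x\ge1$ for the sufficiency) is complete and correct.

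The genuine gap is in the $\lstd$ upper bound, which you yourself flag as the real obstacle. The step ``one constructs component by component a rank-$1$ lattice of $n\le C\,|A_\theta|^{1+\delta}$ points on which the $\widehat f(\bsh)$, $\bsh\in A_\theta$, are recovered with controlled aliasing'' is not a result you can invoke: CBC existence proofs for reconstruction lattices are based on a union bound over pairs of frequencies and only guarantee $n=O(|A_\theta|^2)$, and for general frequency sets single rank-$1$ lattices provably need order $|A_\theta|^2$ points. With $n\asymp|A_\theta|^2$ your argument only yields $p(s)=1/(2B(s))$, which falls short of the claimed $p^*(s)=1/B(s)$; and the eigenvalue-tail transference you offer as an alternative is also not available in a form that preserves exponential rates (the general $\lall\to\lstd$ results in \cite{NW12} either lose in the exponent or carry additive terms of order $m/n$ that destroy exponential convergence). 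The fix — and what \cite{DKPW13} actually does, as Section~\ref{secapp} of the survey states — is to sample on a full tensor grid with component-dependent mesh sizes $m_j\asymp (T/a_j)^{1/b_j}$: the set $A_\theta$ sits inside a box whose cardinality $\prod_j(1+2\lfloor(T/a_j)^{1/b_j}\rfloor)$ is itself $\Theta(T^{B(s)})\asymp|A_\theta|$ up to $s$-dependent constants, so the tensor-product DFT recovers all needed Fourier coefficients from $n\asymp|A_\theta|$ function values, with aliasing error exponentially small by the decay of $\omega_{\bsh}$ along the shifted frequencies $\bsh+\boldsymbol{m}\odot\bsl$. Substituting this grid construction for your lattice step (and dropping the $1+\delta$ loss, which then becomes unnecessary) closes the argument.
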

Note that the conditions are the same as for
the integration problem in Theorem~\ref{thmint(u)exp}.
Hence the comments following Theorem~\ref{thmint(u)exp} also apply
for approximation. Beyond that it is interesting that
we have the same conditions for $\Lambda^{\rm{all}}$
and $\Lambda^{\rm{std}}$, although the class
$\Lambda^{\rm{std}}$ is much smaller than the class $\Lambda^{\rm{all}}$.

We now address conditions on the weights $\bsa$ and $\bsb$
for the standard concepts of tractability. This has not yet been done
before for $\omega_{\bsh}$ of the form~\eqref{formofomega},
and therefore we need to prove the next theorem.
\begin{theorem}\label{PTapprox}
Consider $L_2$-approximation defined over
the Korobov space $H(K_{s,\bsa,\bsb})$ with arbitrary sequences $\bsa$ and
$\bsb$ satisfying~\eqref{aabb}.
Assume for simplicity that
$$
A:=\lim_{j\to\infty}\frac{a_j}{\log\,j}
$$
exists. Then the following results hold:

For $\Lambda^{\rm{all}}$ we have:
\begin{itemize}
\item
$
\mbox{SPT}\ \ \ \Leftrightarrow\ \ \ A>0.$

In this case, the exponent of SPT is
$$
[\tau^{\rm all}]^*=
\frac{2}{A\,\log\,\omega^{-1}}.
$$
\item
$
\mbox{PT}\ \ \  \Leftrightarrow\ \ \ \mbox{SPT}.
$
\item \mbox{WT}  holds for all considered
$\bsa$ and $\bsb$.
\end{itemize}

For $\Lambda^{\rm{std}}$ we have:
\begin{itemize}
\item SPT holds if $A>1/(\log \omega^{-1})$.
In this case, the exponent $[\tau^{{\rm std}}]^\ast$
satisfies
$$
[\tau^{\rm all}]^*\le
[\tau^{{\rm std}}]^\ast \le  [\tau^{\rm{all}}]^\ast +
\frac{1}{2}([\tau^{\rm{all}}]^\ast)^2 < [\tau^{\rm{all}}]^\ast +2.$$
On the other hand, if we have SPT with exponent $[\tau^{\rm{std}}]^\ast$, 
then $A \ge \frac{1}{[\tau^{\rm{std}}]^\ast \log \omega^{-1}}$.
\item PT holds if
there is an integer $j_0\ge2$ such that
$$
 \frac{a_j}{\log\,j}\ge\frac1{\log\,\omega^{-1}}\ \ \ \ \mbox{for all}\
   \ \ \ \ j\ge j_0.
$$
\item
WT holds if $\lim_{j \rightarrow \infty} a_j=\infty$.
\end{itemize}
\end{theorem}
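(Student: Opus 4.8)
The plan is to reduce everything to estimates on the eigenvalues of the non-negative self-adjoint operator $W_s={\rm EMB}_s^{\,\ast}\,{\rm EMB}_s:H(K_{s,\bsa,\bsb})\to H(K_{s,\bsa,\bsb})$. Using the orthonormal basis $\{e_{\bsh}\}_{\bsh\in\ZZ^s}$ from \eqref{basis} one checks, exactly as in \cite{DKPW13}, that $W_s e_{\bsh}=\omega_{\bsh}\,e_{\bsh}$, so the eigenvalues of $W_s$ are precisely the numbers $\omega_{\bsh}=\omega^{\sum_{j=1}^s a_j|h_j|^{b_j}}$, $\bsh\in\ZZ^s$. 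Since for the class $\lall$ the square of the $n$th minimal error equals the $(n+1)$st largest eigenvalue of $W_s$, the information complexity for $\lall$ is
\[
n^{\mathrm{all}}(\e,s)=\#\Big\{\bsh\in\ZZ^s\ :\ \textstyle\sum_{j=1}^s a_j|h_j|^{b_j}<T\Big\},\qquad T:=\frac{2\log\e^{-1}}{\log\omega^{-1}},
\]
so the whole $\lall$ part becomes a counting problem for this truncated region.

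For the $\lall$ upper bounds I would use that, for any $\tau'>0$, $1<(\omega_{\bsh}/\e^2)^{1/\tau'}$ whenever $\omega_{\bsh}>\e^2$, whence
\[
n^{\mathrm{all}}(\e,s)\le\e^{-2/\tau'}\sum_{\bsh\in\ZZ^s}\omega_{\bsh}^{1/\tau'}=\e^{-2/\tau'}\prod_{j=1}^s\Big(1+2\sum_{h=1}^\infty\omega^{a_jh^{b_j}/\tau'}\Big).
\]
From $\ee^x\ge1+x$ we get $h^{b_j}\ge1+b_\ast\log h$, so $\sum_{h\ge1}\omega^{a_jh^{b_j}/\tau'}\le\omega^{a_j/\tau'}\,\zeta\!\big(a_jb_\ast(\log\omega^{-1})/\tau'\big)\le\zeta(2)\,\omega^{a_j/\tau'}$ once $a_j$ is large; and since $\omega^{a_j/\tau'}=j^{-(a_j/\log j)(\log\omega^{-1})/\tau'}$ for $j\ge2$ with $a_j/\log j\to A$, the series $\sum_j\omega^{a_j/\tau'}$ converges exactly when $\tau'<A\log\omega^{-1}$. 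For such $\tau'$ the product above is bounded uniformly in $s$, giving SPT with exponent at most $2/\tau'$, hence $[\tau^{\mathrm{all}}]^\ast\le2/(A\log\omega^{-1})$. The matching lower bound and the necessity of $A>0$ come from cheap sub‑configurations: keeping only $\bsh$ with one nonzero coordinate equal to $\pm1$ gives $n^{\mathrm{all}}(\e,s)\ge1+2\,\#\{j\le s:a_j<T\}\ge\e^{-2/((A+\delta)\log\omega^{-1})}$ for all large $s$ and any $\delta>0$, so $[\tau^{\mathrm{all}}]^\ast\ge2/(A\log\omega^{-1})$; and if $A=0$ then $a_j=o(\log j)$, so for $s=\lceil\e^{-1}\rceil$ the subsets of $\{1,\dots,s\}$ of size $\ell\asymp(\log\e^{-1})^{1-\delta}$ whose $a_j$'s sum to less than $T$ already force $n^{\mathrm{all}}(\e,s)\ge\binom{s}{\ell}$, which is super‑polynomial, ruling out PT. Thus $\mathrm{SPT}\Leftrightarrow\mathrm{PT}\Leftrightarrow A>0$. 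For WT I would not use the trace bound (which is only $(1+c)^s$) but the sharper observation that any admissible $\bsh$ has at most $T/a_\ast$ nonzero coordinates, each taking at most $2(T/a_\ast)^{1/b_\ast}$ values; this yields $\log n^{\mathrm{all}}(\e,s)=\landau\!\big(\log\e^{-1}\,(\log s+\log\log\e^{-1})\big)$, which divided by $s+\e^{-1}$ tends to $0$, so WT holds for all $\bsa,\bsb$.

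For the class $\lstd$ the necessity statements are inherited for free, since $\lstd\subseteq\lall$ gives $n^{\mathrm{std}}(\e,s)\ge n^{\mathrm{all}}(\e,s)$, and the necessary condition $A\ge1/([\tau^{\mathrm{std}}]^\ast\log\omega^{-1})$ for SPT follows by combining this with the lower bound \eqref{lowest} for $e^{\mathrm{int}}(s,s)$ from the proof of Theorem~\ref{PTint}, because integration is no harder than $\lstd$-approximation. For sufficiency I would feed the counting above into a standard bound expressing $\lstd$-errors through the $\lall$-eigenvalues — of the flavor $e^{\mathrm{std}}(n,s)^2\le\lambda_{m+1}+({\rm tr}\,W_s)/(n+1)$ for $m\le n$ (as used in \cite{DKPW13}; see also \cite[Ch.~26]{NW10}) — where ${\rm tr}\,W_s=\prod_{j=1}^s\big(1+2\sum_{h\ge1}\omega^{a_jh^{b_j}}\big)=\int_{[0,1]^s}K_{s,\bsa,\bsb}(\bsx,\bsx)\rd\bsx$. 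When $A>1/\log\omega^{-1}$ we have $\omega^{a_j}\le j^{-(1+\eta)}$ eventually, so $\sum_j\sum_h\omega^{a_jh^{b_j}}<\infty$ and ${\rm tr}\,W_s$ is bounded in $s$, which already gives SPT; balancing the ``truncation'' term $\lambda_{m+1}$ against the ``trace'' term then refines the exponent to $[\tau^{\mathrm{std}}]^\ast\le[\tau^{\mathrm{all}}]^\ast+\tfrac12([\tau^{\mathrm{all}}]^\ast)^2$. When only $a_j/\log j\ge1/\log\omega^{-1}$ for $j\ge j_0$ the same estimate gives ${\rm tr}\,W_s=\landau(s^{\beta})$, hence PT; and when merely $a_j\to\infty$ the factors $2\sum_h\omega^{a_jh^{b_j}}$ tend to $0$, so $\log{\rm tr}\,W_s=o(s)$, which together with the $\lall$ bound from the previous paragraph gives WT.

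The step I expect to require real work is the $\lstd$ sufficiency, in particular the sharp exponent $[\tau^{\mathrm{all}}]^\ast+\tfrac12([\tau^{\mathrm{all}}]^\ast)^2$: the crude ``${\rm tr}/n$'' estimate by itself only yields exponent $2$, and one has to optimize the split between the number of retained eigenfunctions and the sampling budget in the $\lstd$-versus-$\lall$ comparison, together with the uniform control of $\sum_j\sum_h\omega^{a_jh^{b_j}/\tau'}$. Everything on the $\lall$ side is comparatively routine once the eigenvalues $\omega_{\bsh}$ have been identified.
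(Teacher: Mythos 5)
Your overall architecture matches the paper's: identify the eigenvalues of $W_s={\rm EMB}_s^*{\rm EMB}_s$ as $\omega_{\bsh}$, settle the $\lall$ case by $\ell_\tau$-summability of the eigenvalues (you do the Chebyshev/counting argument by hand where the paper cites \cite[Theorem~5.2]{NW08}), and transfer to $\lstd$ via trace conditions (\cite[Theorems~26.11, 26.13]{NW12}) and via the integration lower bound \eqref{lowest} for the necessity of $A\ge 1/([\tau^{\rm std}]^*\log\omega^{-1})$. Two of your $\lall$ arguments are genuinely different from the paper's and are fine: your combinatorial $\binom{s}{\ell}$ lower bound for PT $\Rightarrow$ SPT (the paper instead derives a contradiction with the $C_{\rm PT}$ criterion), and your direct count of admissible $\bsh$ for WT (the paper dominates $\omega_{\bsh}$ by a tensor-product weight and cites \cite[Theorem~5.5]{NW08}). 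One small repair is needed in the first of these: when $A=0$ you only know $a_j=o(\log j)$, so the choice $\ell\asymp(\log\e^{-1})^{1-\delta}$ is not justified (e.g.\ $a_j=\log j/\log\log j$ forces $\ell=O(\log\log\e^{-1})$); you should take $\ell=\lfloor T/a_s\rfloor$, which still tends to infinity because $a_s=o(T)$, and $\binom{s}{\ell}\ge (s/\ell)^{\ell}$ is then still super-polynomial in $s+\e^{-1}$, which is all you need.

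The genuine gap is the $\lstd$ SPT exponent bound $[\tau^{\rm std}]^*\le[\tau^{\rm all}]^*+\tfrac12([\tau^{\rm all}]^*)^2$. The mechanism you propose --- a bound of the flavor $e^{\rm std}(n,s)^2\le\lambda_{m+1}+{\rm tr}(W_s)/(n+1)$ followed by balancing $m$ against $n$ --- cannot produce this exponent: the term ${\rm tr}(W_s)/n$ forces $n\gtrsim\e^{-2}$ regardless of how fast $\lambda_{m+1}$ decays, so any single application of such an estimate is stuck at exponent $2$ and in particular cannot reach $[\tau^{\rm all}]^*+\tfrac12([\tau^{\rm all}]^*)^2$ when that quantity is below $2$. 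The actual result, \cite[Theorem~26.20]{NW12} (which is what the paper invokes, and which lives in Volume~III, not \cite{NW10}), is obtained by an iterative bootstrap: one approximates the Fourier coefficients of $f$ with respect to the leading eigenfunctions using function values, and the convergence exponent improves through a recursion whose fixed point is $[\tau^{\rm all}]^*+\tfrac12([\tau^{\rm all}]^*)^2$; the hypothesis $[\tau^{\rm all}]^*<2$, i.e.\ $A>1/\log\omega^{-1}$, is exactly what makes that recursion start. You correctly flagged this step as the hard one, but as written your sketch does not prove it; you should either reproduce that iterative argument or, as the paper does, cite the theorem outright. Everything else in your proposal is sound.
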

\begin{proof}
Consider first the class $\lall$.
\begin{itemize}
\item From \cite[Theorem~5.2]{NW08} it follows that SPT
for $\Lambda^{\rm{all}}$ is equivalent to the existence of a number
$\tau>0$ such that
$$
C_{{\rm SPT},\tau}:= \sup_s \left( \sum_{\bsh \in \ZZ^s}
  \omega_{\bsh}^{\tau}\right)^{1/\tau}   < \infty.
$$
Note that
$$
\sum_{\bsh \in \ZZ^s} \omega_{\bsh}^{\tau} =
\prod_{j=1}^s \left(1+2 \sum_{h=1}^\infty \omega^{\tau a_j
    h^{b_j}}\right)=
\prod_{j=1}^s \left(1+2\omega^{\tau a_j}\,\sum_{h=1}^\infty \omega^{\tau a_j
    (h^{b_j}-1)}\right).
$$
We have
$$
1\le\sum_{h=1}^\infty \omega^{\tau a_j(h^{b_j}-1)}
\le \sum_{h=1}^\infty \omega^{\tau a_*(h^{b_*}-1)}
=:A_\tau.
$$
We can rewrite $A_\tau$ as
$$
A_\tau=\sum_{h=1}^\infty h^{-x_h},
$$
where $x_1=1$ and for $h \ge 2$ we have
$$
x_h=\tau\,a_* (\log\,\omega^{-1})\,\frac{h^{b_*}-1}{\log\, h}.
$$
Since $\lim_hx_h=\infty$ the last series is convergent and therefore
$A_\tau<\infty$. This proves that
$$
\sum_{\bsh \in \ZZ^s}
\omega_{\bsh}^{\tau}=\prod_{j=1}^s\left
(1+2A(\tau)\,\omega^{\tau a_j}\right)\ \ \ \mbox{with}\ \ \ \
A(\tau)\in\left[1,A_\tau\right].
$$
This implies that
$$
\sup_s\,\left(\sum_{\bsh \in \ZZ^s}
\omega_{\bsh}^{\tau}\right)^{1/\tau}=
\prod_{j=1}^\infty
\left(1+2A(\tau)\,
\omega^{\tau a_j}\right)^{1/\tau}<\infty\ \ \ \ \mbox{iff}\ \ \ \
\sum_{j=1}^\infty
\omega^{\tau a_j}<\infty.
$$
We now show that
$$
\sum_{j=1}^\infty
\omega^{\tau a_j}<\infty\ \ \mbox{for some}\ \ \tau \ \
\ \ \mbox{iff}\ \ \ \ A >0.
$$
Indeed, for $j\ge2$ we can write $\omega^{\tau a_j}=j^{-y_j}$ with
$$
y_j=\tau \log\,\omega^{-1}\ \frac {a_j}{\log\,j}.
$$
If $A>0$ then for an arbitrary positive $\delta$
we can choose  $\tau$ such that $y_j\ge 1+\delta$ for sufficiently
large $j$ and therefore the series
$$
\sum_{j=1}^\infty\omega^{\tau a_j}=\omega^{\tau a_1}+\sum_{j=2}^\infty
j^{-y_j}
$$
is convergent.

If $A=0$ then independently of $\tau$ the series
$\sum_{j=1}^\infty \omega^{\tau a_j}$ is
divergent. Indeed,  then $\lim_jy_j=0$ and
for an arbitrary positive $\delta\le 1$ and $\tau$ we
can choose $j(\delta,\tau)$ such that $y_j\in(0,\delta)$ for all
$j\ge j(\delta,\tau)$ and
$$
\sum_{j=1}^\infty\omega^{\tau a_j}\ge
\sum_{j=j(\delta,\tau)}^\infty j^{-\delta}=\infty,
$$
as claimed. This proves that SPT holds iff $A>0$.

Furthermore, \cite[Theorem~5.2]{NW08} states that the exponent
of SPT is $2\tau^*$, where $\tau^*$ is the infimum of $\tau$ for which
$C_{{\rm SPT},\tau}<\infty$. In our case, it is clear that we must have
$\tau\ge (1+\delta)/((A-\delta)\log\,\omega^{-1})$ for arbitrary
$\delta\in (0,A)$. This completes the proof of this point.
\item To show that PT is equivalent to SPT,
it is obviously enough to show that PT
implies SPT. According to \cite[Theorem~5.2]{NW08},
PT for $\Lambda^{\rm{all}}$ is equivalent to the existence of numbers
$\tau>0$ and $q \ge 0$ such that
$$
C_{{\rm PT}}:= \sup_s \left(
\sum_{\bsh \in \ZZ^s}
\omega_{\bsh}^{\tau}\right)^{1/\tau} s^{-q} < \infty.
$$
This means that
\begin{equation}\label{eqlogCPT}
\log\, \sum_{\bsh \in \ZZ^s} \omega_{\bsh}^{\tau}
\le \tau\left(\log\,C_{\rm PT}\ +\ q\,\log\,s\right).
\end{equation}
From the previous considerations we know that
$$
\log\, \sum_{\bsh \in \ZZ^s}
\omega_{\bsh}^{\tau}=\log\,\prod_{j=1}^s\left(1+2A(\tau)\omega^{\tau
    a_j}\right)=\sum_{j=1}^s\log\, (1+2A(\tau)\omega^{\tau a_j}).
$$
Assume that $A=0$. Suppose first that $a_j$'s are uniformly bounded.
Then\linebreak $\log\, \sum_{\bsh \in \ZZ^s} \omega_{\bsh}^{\tau}$
is of order $s$ which contradicts the inequality \eqref{eqlogCPT}.
Assume now that $\lim_ja_j=\infty$. Then
$\log\, \sum_{\bsh \in \ZZ^s} \omega_{\bsh}^{\tau}$
is of order $\sum_{j=2}^s\omega^{\tau a_j}=\sum_{j=2}^sj^{-y_j}$.
Since $\lim_jy_j=0$ we have for $\delta\in(0,1)$, as before,
$j^{-y_j}\ge j^{-\delta}$ for large $j$. This proves that
$\sum_{j=2}^sj^{-\delta}\approx \int_2^sx^{-\delta}\,{\rm d}x$
is of order $s^{1-\delta}$ which again
contradicts the inequality \eqref{eqlogCPT}.
Hence, $A>0$ and we have SPT.
\item
We now show WT for all $\bsa$ and $\bsb$ with $a_*,b_*>0$.
We have
$$
\omega_{\bsh}=\omega^{\sum_{j=1}^sa_j|h_j|^{b_j}}\le
\omega_{*,\bsh}:=\omega^{a_*\,\sum_{j=1}^s|h_j|^{b_*}}.
$$
Note that for $\bsh={\bf 0}$ we have
$\omega_{\bsh}=\omega_{*,\bsh}=1$. This shows that the approximation
problem with $\omega_{\bsh}$ is not harder than the approximation
problem with $\omega_{*,\bsh}$. The latter problem is a linear tensor
product problem with the univariate eigenvalues of
$W_1={\rm EMB}_1^*\,{\rm EMB}_1: H(K_{1,a_*,b_*})\to H(K_{1,a_*,b_*})$ given by
$$
\lambda_1=1, \ \ \ \ \lambda_{2j}=\lambda_{2j+1}=\omega^{a_*\,j^{b_*}}
\ \ \ \ \mbox{for all}\ \ \ \ j\ge1.
$$
Clearly, $\lambda_2<\lambda_1$ and
$\lambda_j$ goes to zero faster than polynomially with $j$.
This implies WT due to
\cite[Theorem~5.5]{NW08}.\footnote{
In fact, we also have quasi-polynomial tractability,
i.e., $n(\e,s)\le C\,\exp(t(1+\log\,\e^{-1})(1+\log\,s))$
for some $C>0$ and $t\approx 2/(a_*\log\,\omega^{-1})$,
see \cite{GW11}.}
\end{itemize}
We now turn to the class $\lstd$.
\begin{itemize}
\item  For $A>1/(\log\,\omega^{-1})$ we have $[\tau^{\rm all}]^*<2$. {}From
\cite[Theorem 26.20]{NW12} we get SPT for $\lstd$ as well as the
bounds on $[\tau^{\rm std}]^*$. The necessary condition for SPT with exponent $[\tau^{\rm std}]^*$ follows from Theorem~\ref{PTint}.
\item To obtain PT we use \cite[Theorem 26.13]{NW12} which states that
polynomial tractabilities for $\lstd$ and $\lall$ are equivalent if
${\rm trace}(W_s)=\mathcal{O}(s^{\,q})$ for some $q\ge0$,
where ${\rm trace}(W_s)$ is the sum of the eigenvalues of the operator
$$W_s={\rm EMB}_s^*\,{\rm EMB}_s: \ H(K_{s,\bsa,\bsb})\to
H(K_{s,\bsa,\bsb}).
$$
In our case, $W_s$ is given by
$$
W_sf=\sum_{\bsh\in\ZZ^s}\omega_{\bsh}\il
f,e_{\bsh}\ir_{H(K_{s,\bsa,\bsb})}e_{\bsh}
$$
with $e_{\bsh}$ given by~\eqref{basis}.
The eigenpairs of $W_s$ are $(\omega_{\bsh},e_{\bsh})$ since
$$
W_se_{\bsh}=\omega_{\bsh}e_{\bsh}=
\omega^{\,\sum_{j=1}^sa_j|h_j|^{b_j}}\, e_{\bsh}\ \ \ \ \
\mbox{for all}\ \ \ \ \ \bsh\in \ZZ^s
$$
and hence
$$
{\rm trace}(W_s)
=\prod_{j=1}^s\left(1+2A(1)\omega^{a_j}\right)\le
\exp\left(2A(1)\sum_{j=1}^s\omega^{a_j}\right).
$$
Due to the assumption $a_j/\log\,j\ge 1/(\log\,\omega^{-1})$ for $j\ge
j_0$ we have $\omega^{a_j}\le j^{-1}$ for $j\ge j_0$. Therefore
there is a positive $C$ such that
$$
{\rm trace}(W_s)\le C\exp\left(A(1)\sum_{j=j_0}^sj^{-1}\right)\le C\,s^{A(1)}.
$$
This proves that PT for $\lstd$ holds iff PT for $\lall$ holds.
As we already proved, the latter holds iff $A>0$.
The assumption on $a_j$ implies that $A\ge1/(\log\,\omega^{-1})>0$.
\item To obtain WT we use \cite[Theorem 26.11]{NW12}.
This theorem states that weak tractabilities
for classes $\lstd$ and $\lall$ are equivalent if
$\log\,{\rm trace}(W_s)=o(s)$.
The proof of Theorem~\ref{thmapp(u)exp} in \cite{DKPW13} yields that
$\lim_ja_j=\infty$ implies $\sum_{j=1}^s\omega^{a_j}=o(s)$.
Hence,
$$
\log\,{\rm trace}(W_s)\le \log\left(\exp\left(2A(1)\,o(s)\right)\right)=o(s),
$$
as needed.
\end{itemize}
\end{proof}
We briefly comment on Theorem~\ref{PTapprox}.
For the class $\lall$ we know necessary and sufficient conditions on
SPT, PT and WT if the limit of $a_j/\log\, j$ exists.
It is interesting to study the case when the last limit does not
exist. It is easy to check that $A_{\rm inf}:=\liminf_ja_j/\log\,j>0$
implies SPT but it is not clear whether SPT implies $A_{\rm inf}>0$.

For the class $\lstd$ we only know sufficient
conditions for PT and WT. It would be of interest to verify if these conditions
are also necessary. 
For SPT, as for multivariate integration,
there remains a (small) gap between 
sufficient and necessary conditions. Again it would be desirable 
to close this gap.

Finally, we have results regarding the EC-notions of tractability, (e)--(g).
The subsequent theorem has been shown in \cite{DKPW13}.

\begin{theorem}\label{thmappectract}
Consider $L_2$-approximation defined over the Korobov space
$H(K_{s,\bsa,\bsb})$ with arbitrary sequences $\bsa$ and
$\bsb$ satisfying~\eqref{aabb}.
Then the following results hold for both classes
$\Lambda^{\rm{all}}$ and $\Lambda^{\rm{std}}$:
\begin{itemize}
\item\label{appecpt}
EC-PT (and, of course, EC-SPT)
tractability implies uniform exponential convergence,
$
\mbox{EC-PT}\  \ \Rightarrow\ \  \mbox{UEXP}.
$
\item\label{appecwt}
We have
\begin{eqnarray*}
\mbox{EC-WT}\ &\Leftrightarrow&\ \lim_{j\to\infty}a_j=\infty,\\
\mbox{EC-WT+UEXP}\ &\Leftrightarrow&\  B<\infty\ \ \mbox{and}\ \
\lim_{j\to\infty}a_j=\infty.
\end{eqnarray*}
\item\label{appecequiv}
The following notions are equivalent:
\begin{eqnarray*}
\ \ \ \ \mbox{EC-PT}\ &\Leftrightarrow&\ \mbox{EC-PT+EXP}\ \Leftrightarrow\
\mbox{EC-PT+UEXP} \\
\ \ \ \ & \Leftrightarrow&\  \mbox{EC-SPT}\ \Leftrightarrow\
\mbox{EC-SPT+EXP}\ \Leftrightarrow
\ \mbox{EC-SPT+UEXP}.
\end{eqnarray*}
\item \label{appecspt}
EC-SPT+UEXP holds iff $b_j^{-1}$'s are summable and $a_j$'s are
exponentially large in~$j$, i.e.,
$$
\mbox{EC-SPT+UEXP}\ \ \Leftrightarrow\ \
B:=\sum_{j=1}^\infty\frac1{b_j}<\infty\quad\mbox{and}\quad
\alpha^*:=\liminf_{j\to\infty}\frac{\log\,a_j}j>0.
$$
Then the exponent $\tau^*$ of EC-SPT satisfies
$$
\tau^*\in \left[B,B+\min\left(B,\frac{\log\,3}{\alpha^*}\right)\right].
$$
In particular, if $\alpha^* =\infty$ then $\tau^* = B$.
\end{itemize}
\end{theorem}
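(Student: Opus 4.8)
The plan follows the proof in \cite{DKPW13}. The key reduction is that for $\Lambda^{\rm all}$, $L_2$-approximation of ${\rm EMB}_s$ is a tensor-product eigenvalue problem: the squared $n$th minimal error is the $(n+1)$st largest eigenvalue of $W_s={\rm EMB}_s^*{\rm EMB}_s$, whose eigenvalues are, by the computation in the proof of Theorem~\ref{PTapprox}, exactly the numbers $\omega_{\bsh}=\omega^{\sum_{j=1}^sa_j|h_j|^{b_j}}$, $\bsh\in\ZZ^s$. Since the problem is normalized, this gives the exact formula
\[
n^{\rm all}(\e,s)=\#\Big\{\bsh\in\ZZ^s:\ \sum_{j=1}^sa_j|h_j|^{b_j}<T\Big\},\qquad T:=\frac{2\log\e^{-1}}{\log\omega^{-1}},
\]
so that every EC-tractability question for $\Lambda^{\rm all}$ becomes a lattice-point count for the dilate $\{\bsx:\sum_ja_j|x_j|^{b_j}<T\}$, with dilation factor $T$ proportional to $\log\e^{-1}$ --- which is why $1+\log\e^{-1}$ is the right scale. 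For $\Lambda^{\rm std}$ one always has $n^{\rm std}(\e,s)\ge n^{\rm all}(\e,s)$, so \emph{lower} bounds transfer verbatim, while the \emph{upper} bounds use the EC-versions (established in \cite{DKPW13}) of the $\Lambda^{\rm std}$-versus-$\Lambda^{\rm all}$ comparison theorems underlying Theorem~\ref{PTapprox}, i.e.\ of \cite[Theorems~26.11, 26.13, 26.20]{NW12}. The first bullet then needs no new argument: as noted in Section~\ref{sectractability}, an EC-PT bound $n(\e,s)\le c\,s^{\tau_1}(1+\log\e^{-1})^{\tau_2}$ forces $e(n,s)\le\ee\,(1/\ee)^{(n/(cs^{\tau_1}))^{1/\tau_2}}$, i.e.\ UEXP; and in the third bullet ``$+$EXP'' is vacuous since EXP always holds (Theorem~\ref{thmapp(u)exp}) and ``$+$UEXP'' is vacuous by the first bullet, so the only content is EC-PT $\Rightarrow$ EC-SPT.

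For EC-PT $\Rightarrow$ EC-SPT one first shows that EC-PT forces $\lim_ja_j=\infty$: if $a_j\le M$ for all $j$, then every $\bsh\in\{0,\pm1\}^s$ with support of size below $T/M$ lies in the counting region, so $n(\e,s)\ge 2^k\binom sk$ with $k=\lceil T/M\rceil-1$, which grows in $s$ faster than any fixed power once $\log\e^{-1}$ is large, contradicting EC-PT. Once $\lim_ja_j=\infty$, the set $\{j:a_j<T\}$ is finite with cardinality $K(T)$, every $\bsh$ in the region has $h_j=0$ for $j>K(T)$, and hence $n^{\rm all}(\e,s)\le n^{\rm all}(\e,K(T))$ for \emph{all} $s$. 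Comparing the polynomial-in-$s$ EC-PT bound against elementary lower bounds on $n(\e,s)$ --- for instance $n(\e,s)\ge 2\min(s,K(T))+1$, and $n(\e,s)\ge 2^k\binom mk$ for a block of $k$ consecutive coordinates with $a_{m-k+1}+\cdots+a_m<T$ --- shows that $K(T)$ is bounded by a fixed power of $T$; substituting $s=K(T)$ into the EC-PT bound then yields $n^{\rm all}(\e,s)\le c\,K(T)^{\tau_1}T^{\tau_2}=\landau((1+\log\e^{-1})^{\tau'})$, i.e.\ EC-SPT, and $\Lambda^{\rm std}$ follows by the same scheme together with the comparison theorems above.

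For the second bullet, necessity of $\lim_ja_j=\infty$ is the above construction made quantitative: if $a_j\le M$, choose $\e=\e(s)$ with $T=Ms/2$, so $\log\e^{-1}=\tfrac14 M(\log\omega^{-1})s$; then $n(\e,s)\ge\binom{s}{\lfloor s/2\rfloor}\ge 2^s/(s+1)$, so $\log n(\e,s)/(s+\log\e^{-1})$ stays bounded away from $0$ and EC-WT fails (also for $\Lambda^{\rm std}$, since $n^{\rm std}\ge n^{\rm all}$). For sufficiency, apply Rankin's trick: for every fixed $\sigma>0$, $n^{\rm all}(\e,s)\le\omega^{-\sigma T}\prod_{j=1}^s\bigl(1+2\sum_{h\ge1}\omega^{\sigma a_jh^{b_j}}\bigr)$, and since $\omega^{-\sigma T}=\ee^{2\sigma\log\e^{-1}}$ and $\log(1+x)\le x$,
\[
\log n^{\rm all}(\e,s)\le 2\sigma\log\e^{-1}+2\sum_{j=1}^sc_j,\qquad c_j:=\sum_{h=1}^\infty\omega^{\sigma a_jh^{b_j}}.
\]
Here $c_j\le\sum_{h\ge1}\omega^{\sigma a_1h^{b_\ast}}<\infty$ uniformly in $j$, while $c_j\to0$ because $a_j\to\infty$; hence $\sum_{j\le s}c_j=o(s)$ by Cesàro averaging, and dividing by $s+\log\e^{-1}$ gives $\limsup_{s+\log\e^{-1}\to\infty}\log n(\e,s)/(s+\log\e^{-1})\le 2\sigma$. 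Letting $\sigma\downarrow0$ yields EC-WT. The statement about EC-WT$+$UEXP then follows by combining with Theorem~\ref{thmapp(u)exp} ($\mathrm{UEXP}\Leftrightarrow B<\infty$); $\Lambda^{\rm std}$ is handled as before.

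For the fourth bullet, UEXP already forces $B<\infty$ (Theorem~\ref{thmapp(u)exp}), so one characterizes EC-SPT when $B<\infty$. If in addition $\alpha^\ast>0$, then $a_j\ge\ee^{(\alpha^\ast-\epsilon)j}$ eventually for each $\epsilon>0$, so $K(T)\le(\log T)/(\alpha^\ast-\epsilon)+\landau(1)$; bounding $n^{\rm all}(\e,s)\le n^{\rm all}(\e,K(T))\le\prod_{j=1}^{K(T)}\bigl(1+2(T/a_j)^{1/b_j}\bigr)$ and using $\log(1+2x)\le\log3+\log x$ for $x\ge1$, its logarithm is at most $K(T)\log3+(\log T)\sum_jb_j^{-1}-\sum_{j\le K(T)}b_j^{-1}\log a_j=(B+\landau(1/\alpha^\ast))\log T$; a careful accounting of the negative $\log a_j$ terms, together with a count sharper than the crude product when $\alpha^\ast$ is small, yields EC-SPT with exponent $\tau^\ast\le B+\min(B,\log3/\alpha^\ast)$. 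Conversely, if $\alpha^\ast=0$, pick $j_k\uparrow\infty$ with $\log a_{j_k}=o(j_k)$; since $\bsa$ is nondecreasing, every $\bsh\in\{0,\pm1\}^{j_k}$ with support of size $\le\epsilon j_k$ has $\sum_ja_j|h_j|^{b_j}\le\epsilon j_k\,a_{j_k}$, so at the scale $\log\e^{-1}\asymp j_ka_{j_k}=\ee^{o(j_k)}$ one gets $n(\e,j_k)\ge\binom{j_k}{\lfloor\epsilon j_k\rfloor}\ge\ee^{c_\epsilon j_k}$, which is faster than any power of $1+\log\e^{-1}$ and rules out EC-SPT. Finally the lower bound $\tau^\ast\ge B$ is a volume estimate: for each fixed $m$ and $T\to\infty$ the region contains at least $c_mT^{B_m}$ integer points supported on $\{1,\dots,m\}$ (with $B_m=\sum_{j\le m}b_j^{-1}$), and since $n(\e,s)=n^{\rm all}(\e,\infty)$ for $s$ large, EC-SPT forces its exponent $\ge B_m$ for every $m$, hence $\ge B$; in particular $\alpha^\ast=\infty$ kills the second bracket term and gives $\tau^\ast=B$.

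The step I expect to be the main obstacle is the implication EC-PT $\Rightarrow$ EC-SPT, specifically proving that $K(T)$ (and the ``block'' quantities) are genuinely polynomial in $T$ using only polynomial-in-$s$ growth --- the naive lattice-point bounds are far too lossy for this. The other delicate ingredient is the class $\Lambda^{\rm std}$, where the $n$th minimal error is no longer an eigenvalue, so one must either transport the $\Lambda^{\rm std}$-versus-$\Lambda^{\rm all}$ comparison machinery of \cite{NW12} to the exponential-convergence scale or construct explicit algorithms from function values whose errors match the lattice-point counts above.
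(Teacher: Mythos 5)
First, note that the paper itself does not prove Theorem~\ref{thmappectract}: it only cites \cite{DKPW13}, so there is no in-paper argument to compare against line by line. Your skeleton for $\lall$ --- reducing everything to counting lattice points $\bsh$ with $\sum_j a_j|h_j|^{b_j}<T$ and $T\asymp\log\e^{-1}$, the Rankin/Chernoff bound for EC-WT, the restriction of the support to the first $K(T)$ coordinates once $a_j\to\infty$, and the binomial-coefficient lower bounds for the necessity directions --- is sound and is essentially the mechanism used in \cite{DKPW13}. Two caveats on that half. For EC-PT $\Rightarrow$ EC-SPT the cleaner route is to run your $\alpha^*=0$ lower bound from the fourth bullet directly against the EC-PT bound: the extra factor $s^{\tau_1}=j_k^{\tau_1}$ is swallowed by $\ee^{o(j_k)}$, so EC-PT already forces $\alpha^*>0$ (and UEXP forces $B<\infty$), and EC-SPT then follows from the sufficiency half of the fourth bullet; this avoids the lossy ``$K(T)$ polynomial in $T$'' detour that you yourself flag as the main obstacle. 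Also, the exponent interval $\left[B,B+\min\left(B,\frac{\log 3}{\alpha^*}\right)\right]$ is only gestured at (``a careful accounting \dots yields''); that accounting is where the real work in \cite{DKPW13} lies.

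The genuine gap is the class $\lstd$. Lower bounds transfer via $n^{\rm std}\ge n^{\rm all}$ as you say, but your plan for the upper bounds --- invoking ``EC-versions'' of \cite[Theorems~26.11, 26.13, 26.20]{NW12} --- relies on results that do not exist and cannot be obtained by recalibration. Those theorems preserve tractability on the $\e^{-1}$ scale; applied to a problem with $n^{\rm all}(\e,s)=\mathcal{O}((1+\log\e^{-1})^{\tau})$, i.e.\ with $\e^{-1}$-exponent zero, they only yield $n^{\rm std}(\e,s)=\mathcal{O}(\e^{-\delta})$ for every $\delta>0$, which is far weaker than any EC-tractability statement, and the trace-based WT/PT equivalences are likewise calibrated to the pairing $(\e^{-1},s)$. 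This is precisely why \cite{DKPW13} proves the $\lstd$ half constructively, via explicit algorithms sampling on grids with coordinate-dependent mesh sizes whose errors match the $\lall$ lattice-point counts --- the point the survey itself makes in the paragraph following the theorem. Without importing those constructions, or proving a genuinely new $\lstd$-versus-$\lall$ comparison at the $\log\e^{-1}$ scale, the ``for both classes'' part of every bullet remains unproved in your proposal.
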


Again, the conditions are the same as for the integration problem
in Theorem~\ref{thmintectract} and we have the same conditions
for $\Lambda^{\rm{all}}$ and $\Lambda^{\rm{std}}$.
The comments following Theorem~\ref{thmintectract} apply also
for approximation. We remark that the results are constructive.
The corresponding algorithms for the class $\Lambda^{\rm{all}}$
and $\Lambda^{\rm{std}}$ can be found in \cite{DKPW13}.

We want to stress that for the class $\lstd$ we obtain the results of
Theorem~\ref{thmappectract} by computing function values at grid
points with varying mesh-sizes for successive variables. Such grids
are also successfully used for multivariate integration in
\cite{DKPW13,KPW12}. This relatively simple design of sample points
should be compared with the design of (almost) optimal
sample points for analogue
problems defined over spaces of finite smoothness. In this case,
the design is much harder and requires the use of deep theory of
digital nets and low discrepancy points, see~\cite{DP10,N92}.

It is worth adding that if we use the definition~\eqref{PP13}
of WT with $\kappa>1/b_*$ then it is proved in~\cite{PP13}
that WT holds even for $a_j=a_1>0$ and $b_j=b_*>0$. Hence, the condition
$\lim_ja_j=\infty$ which is necessary and sufficient for EC-WT is now
not needed.

\section{Conclusion and Outlook}\label{secconcl}

The study of tractability with exponential convergence is a new
research subject. We presented a handful of results only for
multivariate integration and approximation problems defined over
Korobov spaces of analytic functions. Obviously, such a study should
be performed for more general multivariate problems defined over
more general spaces of $C^\infty$ or analytic functions. It would be
very much desirable to characterize multivariate problems for which
various notions of tractability with exponential convergence hold.
In this survey we presented the notions of EC-WT, EC-PT and EC-SPT.
We believe that other notions of tractability with exponential
convergence should be also studied. In fact, all notions which were
presented for tractability with respect to the pairs $(\e^{-1},s)$ can
be easily generalized and studied for the pairs $(1+\log\,\e^{-1},s)$.
In particular, the notions of EC-QPT
(exponential convergence-quasi polynomial tractability)
and EC-UWT (exponential convergence-uniform weak tractability)
are probably the first candidates for such a study.
Quasi-polynomial tract\-a\-bi\-lity was briefly mentioned in the footnote
of Section~\ref{secapp}. Uniform weak tractability generalizes
the notion of weak tractability and means that
$n(\e,s)$ is not exponential in $\e^{-\alpha}$ and $s^{\,\beta}$
for all positive $\alpha$ and $\beta$, see~\cite{S13}.

The proof technique used for EC-tractability
of integration and approximation is quite different than the
proof technique used for standard tractability. Furthermore, it
seems that some results are easier to prove for EC-tractability 
than their counterparts for the standard
tractability. In particular, optimal design of sample points seems to
be such an example. We are not sure if this holds for
other multivariate problems.

We hope that exponential convergence and tractability
will be an active research field in the future.

\end{document}